\def\goth #1{{\mathfrak{ #1}}}
\def\set#1{\{\,#1\,\}}            
\def\>#1{{\bf #1}}                
\def\matriz#1#2{\left( \begin{array}{#1} #2 \end{array}\right) }
\def\Eq#1{{\begin{equation} #1 \end{equation}}}
\def\mitad{\frac{1}{2}}
\def\Tr{\hbox{{\rm Tr}}}
\newcommand{\R}{\mathbb{R}}    
\newcommand{\C}{\mathbb{C}}    
\renewcommand{\L}{\mathbb{L}}
\def\dim{\hbox{{\rm dim}}}              
\def\Ad{{\hbox{Ad}}}
\def\diag{{\hbox{diag}}}
\newtheorem{theorem}{Theorem}
\newtheorem{corollary}{Corollary}
\newtheorem{proposition}{Proposition}
\begin{document}
\title{Optimal Control Realizations of Lagrangian Systems with Symmetry}

\author{M. Delgado-T\'ellez}
\address{Depto. de Matem\'atica Aplicada Arquitectura T\'ecnica,  Univ. Polit\'ecnica de Madrid.
Avda. Juan de Herrera 6, 28040 Madrid, Spain}
\email{marina.delgado@upm.es}

\author{A. Ibort}
\address{Depto. de Matem\'aticas, Univ. Carlos III de
Madrid, Avda. de la Universidad 30, 28911 Legan\'es, Madrid, Spain.}
\email{albertoi@math.uc3m.es}

\author{T. Rodr\'{\i}guez de la Pe\~na}
\address{Depto. de Ciencias, Univ. Europea de Madrid.  C/ Tajo s/n, Villaviciosa de Od\'on, 28670, Madrid,
Spain.} \email{thaliamaria.rodriguez@uem.es}

\date{\today}
\thanks{This work has been partially
supported by the Spanish MCyT grants MTM2004-07090-C03-03, BFM2001-2272.}

\begin{abstract}
A new relation among a class of optimal control systems and
Lagrangian systems with symmetry is discussed. It will be shown
that a family of solutions of optimal control systems whose
control equation are obtained by means of a group action are in
correspondence with the solutions of a mechanical
Lagrangian system with symmetry. This result also explains the 
equivalence of the class of Lagrangian systems with symmetry and
optimal control problems discussed in \cite{Bl98}, \cite{Bl00}.

The explicit realization of this correspondence is obtained by a
judicious use of Clebsch variables and Lin constraints, a
technique originally developed to provide simple realizations of
Lagrangian systems with symmetry. It is noteworthy to point out
that this correspondence exchanges the role of state and control
variables for control systems with the configuration and Clebsch
variables for the corresponding Lagrangian system.

These results are illustrated with various simple applications.

\end{abstract}

\maketitle

\tableofcontents

\newpage

\section{Introduction}

A new insight on the properties of Lagrangian systems with symmetry has been gained by looking at
them from the point of view of optimal control theory (see for instance \cite{Bl00}) and,
conversely, a new representation for a class of optimal control problems and Pontryagin's maximum
principle was obtained in this way. In particular, it was shown that the rigid body problem and
Euler's equation for incompressible fluids, when formulated as optimal control problems, gave rise
to the symmetric body realization of the rigid body and the impulse momentum representation of
Euler's equations for inviscid incompressible fluids respectively. Thus, it was found that the
application of Pontryagin maximum principle to those optimal control problems leads to a nicer, more
symmetrical form of the system dynamical equations.

In this paper we will discuss the underlying geometrical structure common to these two examples,
showing that they are particular instances of a general correspondence among the
solutions of a particular class of optimal control problems, that will be called
Lie-Scheffers-Brockett optimal control systems, and Lagrangian systems with symmetry.

We will  say that a control system is of Lie-Scheffers-Brockett class\footnote{This class of
systems were considered first, obviously not as a control problem, by Lie and Scheffers \cite{Li93}
and much later, they where also discussed at length by R. Brockett \cite{Br70,Br73} already in the
realm of control theory.} if in local coordinates $x^i$ in state space, has the form:
$$\dot{x}^i = \sum_{a=1}^r c_a(t) X_a^i (x) ,$$
where in addition the vector fields $X_a = X_a^i \partial /\partial x^i$, satisfy
the Lie closure relations:
$$[X_a, X_b] = C_{ab}^c X_c ,$$
for some set of constants $C_{ab}^c$. Lie-Scheffers-Brockett systems are the prototypical examples
of dynamical systems defined on Lie groups or homogeneous spaces. Lie and Scheffers found \cite{Li93} that for
dynamical systems of this form there always exist non-linear superposition principles for the
composition of solutions of the differential equation, the most celebrated and well-known example
being the Riccati equation. Furthermore, if $M$ is a homogeneous space for the Lie group $G$ and we
denote by $\xi_a$ a basis of its Lie algebra $ \goth g$, we can consider the family of vector
fields $X_a$ induced on $M$ by them. A Brockett system on $M$ is just a vector field
$$\Gamma = \sum_{a=1}^r u^a(t) X_a $$
where $u^a$ denote the control variables. In this sense, Brockett theory of control systems on
homogeneous spaces are a global formulation of the systems considered by Lie and Scheffers (see
\cite{Ca00} and references therein for a detailed account of Lie-Scheffers theorem and its
applications).

On the other hand, Lagrangian systems with symmetry exhibit important structures that have been
widely used to study the qualitative structure of their solutions. It was the work on reduction of
symplectic systems with symmetry by J. E. Marsden and A. Weinstein \cite{Ma74} summarizing and
improving classical ideas on symmetry that opened the door to a systematic understanding of the
structures of some of the paradigmatic systems in mechanics and differential equations, for
instance the rigid body and Euler's equations. Soon it was realized that such systems can be
obtained by reduction of simple Lagrangian systems with symmetry usually defined on the tangent
bundle of a Lie group (finite or infinite dimensional) or, more generally, on a principal bundle
over some configuration space. This idea has been extensively used to describe a large class of
systems running from plasma physics to elasticity and other problems in continuum media. An
important contribution to the field has been the intrinsic understanding of the reduction of
Euler-Lagrange's equations and some of their most important related structures (see \cite{Ce01} and
references therein for a panoramic vision of these ideas).

Apart from reduction, an important tool in the study of Lagrangian systems with symmetry is
provided by Clebsch variables and Lin constraints. They were introduced as a way of providing a
variational derivation of Euler's equations in Eulerian variables \cite{La32,Li63,Ma83}. Later on,
in \cite{Ce87a,Ce87b} a geometrical framework was developed for this idea and nowadays new
applications in computational aspects of dynamics are arising (see for instance \cite{Co09,De09} and
references therein). If $L$ is a Lagrangian system with symmetry group $G$ defined for instance on
a principal fibre bundle $Q$ with structure group $G$, then a way to derive the equations of motion
on the quotient space $TQ/G$ is to use an auxiliary space $P$ where the group $G$ acts nonlinearly
in general. Then we choose on such space an appropriate subspace of curves. Such subspace is
selected by using a given connection $B$ on $Q$. It was proven in \cite{Ce87b} that the space of
horizontal curves (provided that a suitable technical condition is satisfied) is isomorphic to the
space of curves of the original variational principle defined by $L$. Moreover, the conditions
defining such subspace of horizontal curves can be expressed neatly as the vanishing of certain
differential condition, requirement that was first considered by Lin in the context of fluid
dynamics (for trivial bundles and connections) and, thereby, were named Lin constraints
\cite{Ce87b}. When using a suitable Lagrange's multiplier theorem to incorporate the constraints
into the Lagrangian density, it is found the expression described in Section  \ref{Clebsch}. It
will be called a Clebsch realization of the system $L$ with horizontal Lin constraints. The
auxiliary spaces used for this construction were originally $E\oplus E^*$, where $E$ is a linear
representation space for the group $G$. Variables on $E$ and $E^*$ were called Clebsch variables
and this terminology comes from the work by Clebsch providing a suitable representation for the
(eulerian) velocity field \cite{La32}. A key idea in this paper is that if we consider as
auxiliary space $T^*P$, the cotangent bundle of $P$, the variables $(x^i, p_i)$ on it can be
identified with state and costate variables of an optimal control problem. On the other hand, as it
was indicated above, the system thus obtained by using Clebsch variables and horizontal Lin
constraints, is equivalent to the original Lagrangian system with symmetry. In this way, we will
prove the equivalence between a Lagrangian system with symmetry and the appropriate optimal control
problem.

The paper is organized as follows: in Section \ref{LSB} Lie-Scheffers-Brockett optimal control
systems are presented. Afterwards, in Section  \ref{Clebsch}  we will discuss how optimal control
problems of Lie-Scheffers-Brockett type can be identified with a Clebsch realization of a
Lagrangian system.  Section  \ref{horizontal_curves} will be devoted to discuss the spaces of
curves where the variational principle determined by the Clebsch Lagrangian is defined.
In Section \ref{lin_cons}  it will be shown that the variational principle of an optimal control system of
Lie-Scheffers-Brockett type is equivalent to a Clebsch realization of a Lagrangian system defined
by the objective functional and with horizontal Lin constraints given by the control equation
itself. Then, the equivalence with a Lagrangian system with suitable end-point conditions is
established in Section \ref{main}. Finally, some simple applications and examples are discussed in Section \ref{examples}.

\newpage

\section{Lie-Scheffers-Brockett optimal control systems}\label{LSB}

Let $G$ be a Lie group acting on the right on a smooth manifold
$P$. We shall denote by $\goth g$ the Lie algebra of $G$ and by
$\xi \in \goth g$ a generic element. Then $\xi_P$ will denote the
Killing vector field  on $P$ associated to $\xi$ by the action of $G$,
i.e.,
\begin{equation}\label{xiP}
\xi_P(x) = \left.\frac{d}{dt} \Big(x\cdot \exp (-t\xi)\Big) \right|_{t = 0}, \quad \quad x \in P .
\end{equation}
If $\xi_a$ is a given basis in $\goth g$, $[\xi_a, \xi_b]= C_{ab}^c \xi_c$, and $X_a$ denotes
the corresponding vector field on $P$, i.e., $X_a = (\xi_a)_P$ with
the notation above.  We will also have that:
\begin{equation}\label{xi_P}
\xi_P = \sum_{a=1}^r u^a X_a ,
\end{equation}
where $\xi = u^a \xi_a$ and,
\begin{equation}\label{lie_algebra}
 [X_a,X_b] = C_{ab}^c X_c .
\end{equation}
If we let the coordinates $u^a$ on the Lie algebra $\goth g$ be
time-dependent functions, they can be interpreted as control
variables controlling the system $\Gamma :=\xi_P$.

We shall consider the non-autonomous dynamical system on $P$ defined by the vector field $\xi_P$,
i.e., \Eq{\label{xiS} \dot{x}^i = \xi_P (x)  = \sum_{a=1}^r u^a(t) X_a^i(x) ,}and we will interpret
it as the state equation for a control system with state space $P$.

We will further restrict the class of systems we are interested in by introducing another structure
that will provide a particular realization of the control variables $u^a$. Suppose that the Lie
group $G$ acts on the left on a smooth manifold $Q$. We shall assume for simplicity that the action
is proper and free, hence the orbit quotient space $Q/G$ is a smooth manifold $N$ and the canonical
projection map $\pi_2 \colon Q \to N$ is a submersion. Therefore, the map $\pi_2$ defines a principal
fibration over $N$ with structure group $G$.

Let $B$ be a principal connection on the principal bundle $Q(G,N)$, this is, $B$ is a $\goth
g$-valued 1-form $B\colon TQ\to\goth g$ on $Q$ such that $B(\xi_Q(q))=\xi$, $\forall \xi\in \goth
g$, and $L_g^* B = \Ad_g B$, $g\in G$, where $\Ad_g$ denotes the adjoint action of $G$ on $\goth g$
and $L_g$ the left action of $G$ on $Q$. The restriction of a connection to the tangent space
$T_qQ$ is denoted as $B_q$. Then we can define a map $\xi\colon TQ \to \goth g$ by means of:
\Eq{\label{controlrep} \xi(q,\dot{q}) = B_q(\dot{q}) ,   \quad \quad (q,\dot{q}) \in T_qQ .} 
If we have now a Lie-Scheffers-Brockett system of the form, 
\Eq{\label{controlrepres} \dot{x} =\xi(q,\dot{q})_P (x) ,}
we can consider it as a control system whose control variables are $u = (q,
\dot{q}) \in TQ$.  In local coordinates $(q^i,\dot{q}^i)$ on $TQ$,
the map $\xi$  will have the form:
$$\xi(q,\dot{q}) = \dot{q}^i B_i(q) =
\dot{q}^i B_i^a(q) \xi_a ,$$ 
and if we describe the vector fields
$X_a$ in local coordinates $x^\alpha$ on $P$ as
$$ X_a = \xi_a^\alpha (x) \frac{\partial}{\partial x^\alpha} ,$$
we finally get for the control system above the following expression in local coordinates:
$$ \dot{x}^\alpha = \dot{q}^i B_i^a(q) \xi_a^\alpha (x) .$$

Finally, if $L\colon P\times TQ\to \R$ is a function on $P\times TQ$, we can construct the
objective functional 
\Eq{\label{objective} S_L[x,q] = \int_0^T L(x,q,\dot{q}) dt ,}defined on a suitable
space of curves $\Omega$ on $P \times Q$.  We restrict our attention to $G$-invariant
objective functionals, i.e., those $S$ defined by densities $L\colon P \times TQ\to \R$ which are
$G$-invariant functions on $P\times TQ$. Notice that $G$ acts on the left on $TQ$ by lifting the
action on $Q$ to $TQ$. The quotient space $P\times TQ/G$ can be identified, using an auxiliary connection,
with the pull-back to $TN$ of the bundle $P\times {\rm ad}Q$, where ${\rm ad} Q$ is the $\goth
g$-algebra bundle adjoint of $Q\to N$ that can be defined as $Q\times_G \goth g = Q\times \goth g
/G$ with natural local coordinates $(n^\alpha,\dot{n}^\alpha,\xi^a )$, where $n^\alpha$ are local
coordinates on $N$ and $\xi^a$ on $\goth g$.

Finally, fixing the endpoint conditions \Eq{\label{endpoint} x_0 = x(0), \quad x_T = x(T) ,}we will
consider the optimal control problem on the state space $P$ with control equation
(\ref{controlrepres}), objective functional (\ref{objective}), and the fixed endpoint conditions
above (\ref{endpoint}).

Provided that the manifold $Q$ is boundaryless, it is well-known
that if the curve $(x(t), q(t), \dot{q}(t))$ is a normal extremal
of the optimal control problem  (\ref{controlrepres}), (\ref{objective})  and (\ref{endpoint}), then there exists an
extension $(x(t), p(t))$ of the state trajectory $x(t)$ to the
costate space $T^*P$, satisfying Pontryagin equations:
\begin{equation}\label{MPP} \dot{x} =\xi(q,\dot{q})_P (x), \quad   \dot{p} = -\frac{\partial H_P}{\partial x}, \quad  \frac{\partial H_P}{\partial q} = \frac{\partial H_P}{\partial \dot{q}} = 0 ,
\end{equation}
where $H_P$ is Pontryagin's hamiltonian function,
$$ H_P(x,p,q,\dot{q}) = \langle p, \xi(q,\dot{q})_P (x) \rangle - L(x, q,\dot{q}) .$$
In more geometrical terms, normal extremals are
integral curves of the presymplectic system $(M_P,\Omega_P, H_P)$
where $M_P = T^*P\times TQ$ and $\Omega_P$ is the presymplectic form
obtained by pull-back to $M_P$ of the canonical symplectic form
$\omega_P$ on $T^*P$ (see for instance \cite{De03}, \cite{Ib10}).

\section{Clebsch representation of Lie-Scheffers-Brockett optimal control systems}\label{Clebsch}

Normal extremals of the optimal control problem above are critical paths of the objective
functional (\ref{objective}) in an appropriate space of curves $\Omega (P \times Q ; x_0, x_T)$
subjected to the  restriction imposed by eq. (\ref{controlrepres}).     
We will assume on this article that all curves and functions are smooth, 
which is consistent with the geometrical framework we are using and 
we introduce the constraints defined by the control equation (\ref{controlrep}) as Lagrange multipliers. 
In Appendix A we discuss and prove a version of Lagrange multipliers theorem which is suitable for this setting (see Thm. \ref{LMT} and Thm. \ref{smooth_LMT} in for details).   Thus,  if $(x(t), q(t))$ is a smooth
extremal for the objective functional (\ref{objective}) satisfying the control equation
(\ref{controlrep}), the lifted curve $(x(t),p(t))$ will be smooth and  $(x(t), p(t), q(t))$ will
be a critical path of the extended functional
 \Eq{\label{SL} S_{\L} [x,p,q]= \int_0^T \left( L(x,q,\dot{q})
+ \langle p, \dot{x} - \xi(q,\dot{q})_P (x)
    \rangle \right)  dt }in the space of smooth curves $\gamma(t) = (x(t),p(t),q(t))
\in \Omega_{x_0, x_T} (T^*P\times Q)$  with fixed endpoints $x_0$,
$x_T$.

A simple computation allows us to write the Lagrangian density
$\L\colon T(T^*P) \times  TQ\to \R$ of the functional (\ref{SL}),
as
\begin{eqnarray}\label{LClebsch} \L (x,p,\dot{x}, \dot{p}, q,\dot{q}) =
    L(x,q,\dot{q}) - \langle p, (B_q(\dot{q}))_P(x)   \rangle + \langle p, \dot{x} \rangle
    =\\\nonumber= L(x,q,\dot{q}) + \langle J(x,p), B_q(\dot{q}) \rangle
    +
    \langle \theta_P(x,p), \dot{x}\rangle .
 \end{eqnarray}%
The map $J\colon T^*P \to {\goth g}^*$ in (\ref{LClebsch}) denotes the momentum map associated to
the cotangent lifting of the action of $G$ to $P$, this is, $\langle J(x,p), \xi \rangle = \langle
p, \xi_P(x) \rangle$, for all $\xi \in \goth g$, $(x,p)\in T^*P$. The 1-form $\theta_P$ denotes the
canonical Liouville 1-form on $T^*P$ with the following expression in local coordinates $\theta_P =
p_\alpha dx^\alpha$.

We will show in the following sections that the expression on the r.h.s. of eq. (\ref{LClebsch})
has the form of a Clebsch Lagrangian. We summarize the discussion so far in the following
statement:

\begin{theorem}\label{thOpti}
Let $x_0,x_T$ be two fixed points in $P$ and $q_0$ on $Q$, then the following assertions are
equivalent:
\begin{itemize}
\item[i.-]  The smooth curve $(x(t),q(t))$ is a
critical point of  the objective functional:
$$  S_L [x,q] = \int_0^T L(x,q,\dot{q}) dt ,$$
on the space of smooth curves satisfying the equation
$\dot{x}=\xi(q,\dot{q})_P (x)$, and $x(0) = x_0$, $x(T) = x_T$,
$q(0) = q_0$.

\item[ii.-]  There exists a smooth lifting $p(t)$ of the curve
$x(t)$ to $T^*P$ such that the curve $(x(t),p(t),q(t))$ is
a critical  point of the functional:
$$ S_{\L}[x,p,q] = \int_0^T \left( L(x,q,\dot{q}) + \langle J(x,p), B_q(\dot{q}) \rangle + \langle \theta_P(x,p), \dot{x}\rangle \right) dt .$$

\end{itemize}

\end{theorem}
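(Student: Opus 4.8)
The plan is to prove the equivalence by recognizing that the functional $S_{\L}$ in assertion (ii) is obtained from $S_L$ by adjoining the control equation $\dot{x} - \xi(q,\dot{q})_P(x) = 0$ via Lagrange multipliers $p(t)$, so the whole argument reduces to a clean application of the Lagrange multiplier theorem for variational problems on spaces of curves (Thm.~\ref{LMT} and Thm.~\ref{smooth_LMT} of Appendix A). First I would set up the two relevant spaces of curves carefully: on the one hand $\Omega_{x_0,x_T,q_0}(P\times Q)$, the smooth curves $(x(t),q(t))$ with the prescribed endpoint/initial conditions, restricted to the submanifold cut out by the constraint map $\Phi[x,q] := \dot{x} - \xi(q,\dot{q})_P(x)$, viewed as taking values in the space of curves in $TP$ over the base curve; on the other hand the larger space $\Omega_{x_0,x_T}(T^*P\times Q)$ of curves $(x(t),p(t),q(t))$, with the same conditions on $x$ and $q$ but with $p$ free (in particular $p$ is unconstrained at the endpoints, which is exactly the natural boundary condition produced by the multiplier theorem). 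The constraint $\Phi$ is affine in the velocities and its linearization is surjective onto the appropriate space of sections because it contains the term $\dot{x}$ with the identity as coefficient; this is the regularity hypothesis needed to invoke the multiplier theorem.

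Next I would write the extended functional $\widetilde{S}[x,p,q] = S_L[x,q] + \int_0^T \langle p(t), \Phi[x,q](t)\rangle\, dt = \int_0^T \big( L(x,q,\dot{q}) + \langle p, \dot{x} - \xi(q,\dot{q})_P(x)\rangle\big)\,dt$, which is precisely $S_{\L}$ as given in equation (\ref{SL}). The Lagrange multiplier theorem then states that $(x(t),q(t))$ is a critical point of $S_L$ on the constrained space if and only if there exists a curve $p(t)$ in $T^*P$ over $x(t)$ — and, by the smooth version of the theorem, a \emph{smooth} such lifting — making $(x(t),p(t),q(t))$ a critical point of $\widetilde{S}$ on the unconstrained space $\Omega_{x_0,x_T}(T^*P\times Q)$; the variation with respect to $p$ recovers the constraint, so no extra endpoint condition on $p$ is needed. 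This gives the equivalence of (i) and (ii) for the functional written in the form~(\ref{SL}).

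Finally I would perform the ``simple computation'' already announced before the theorem: expand $\langle p, \dot{x} - (B_q(\dot{q}))_P(x)\rangle$ using the definition of the momentum map, $\langle p, \xi_P(x)\rangle = \langle J(x,p), \xi\rangle$ with $\xi = B_q(\dot{q})$, and the coordinate expression $\theta_P = p_\alpha\, dx^\alpha$, so that $\langle p,\dot{x}\rangle = \langle \theta_P(x,p), \dot{x}\rangle$ along the lifted curve. Substituting these identities into the integrand of~(\ref{SL}) yields exactly the integrand $L(x,q,\dot{q}) + \langle J(x,p), B_q(\dot{q})\rangle + \langle \theta_P(x,p), \dot{x}\rangle$ appearing in assertion (ii), which is formula~(\ref{LClebsch}). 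Composing this identification of functionals with the multiplier equivalence completes the proof.

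The main obstacle is not the computation, which is genuinely routine, but making the Lagrange multiplier step rigorous in the infinite-dimensional setting of spaces of smooth curves: one must be careful that the constraint map $\Phi$ is smooth between the appropriate manifolds of curves, that its derivative is surjective with splitting kernel (so that the constrained space is a genuine submanifold and the annihilator of the tangent space is realized by an actual multiplier curve $p(t)$), and crucially that the multiplier can be chosen \emph{smooth} rather than merely a distribution or an $L^2$ section — this is the content of the ``smooth'' refinement Thm.~\ref{smooth_LMT}, and invoking it correctly (checking its hypotheses for $\Phi$) is where the real work lies. Once that theorem is in hand, the rest of the argument is bookkeeping.
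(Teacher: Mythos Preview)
Your proposal is correct and follows essentially the same approach as the paper: apply the Lagrange multiplier theorem (Thm.~\ref{LMT}/\ref{smooth_LMT}) to the constraint $\dot{x}=\xi(q,\dot{q})_P(x)$ to pass to the extended functional~(\ref{SL}), then rewrite the multiplier term via the momentum map and Liouville $1$-form identities to obtain the integrand of~(\ref{LClebsch}). If anything, you are more explicit than the paper's own proof, which simply cites Thm.~\ref{LMT} and eq.~(\ref{LClebsch}) in two sentences; your discussion of the regularity of $\Phi$ and the smoothness of the multiplier is precisely the content deferred to the appendices.
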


\begin{proof} We will consider the variational principle (i) as the problem of determining the critical points of
the functional $S_L$ in the subspace of smooth curves on $P \times Q$ with fixed
endpoints $x(0) = x_0$, $x(T) = x_T$ and $q(0) = q_0$ and satisfying the constraint
$\dot{x}=\xi(q,\dot{q})_P (x)$. Now, because of the Lagrange multipliers theorem, (Appendix A, Thm. \ref{LMT}) and eq. (\ref{LClebsch}) that allows to write the
Lagrange multiplier $\langle p, \dot{x} - (B_q(\dot{q}))_P(x)   \rangle$
as the Lagrangian density in (ii), the critical points of (i) are the same as those of (ii).
\end{proof}

We will call the Lagrangian $\L$ the Clebsch Lagrangian for the Lie-Scheffers-Brockett optimal
control problem stated in Section \ref{LSB}.

\section{Spaces of horizontal curves on associated bundles}\label{horizontal_curves}

One of the consequences of the previous discussion regarding
solutions of a Lie-Scheffers-Brockett optimal control problem and
critical points of a Clebsch Lagrangian is that the later is
equivalent to a Lagrangian system with symmetry.  It was
established in \cite{Ce87b} that the critical points of a Lagrangian system with
symmetry on $TQ$ are in one-to-one correspondence with the critical points of an
appropriate Clebsch Lagrangian representation of the system. We
will present here a similar result that is adapted to the
setting used in this paper. Before giving a precise statement
of the result we need to discuss some background material about spaces of
horizontal curves.

\subsection{The associated bundle $P\times_G Q$}\label{associated}

As in previous sections $P$ will denote a right $G$-space
whose space of orbits $P/G$ will be denoted by $M$.   We will assume that $M$ is
a smooth manifold and that the canonical projection map $\pi_1\colon P \to M$ is a submersion.
Finally it will be assumed that $Q$ is a left
$G$-principal bundle with base manifold $N$ and projection map  $\pi_2\colon Q \to N$.

Given the principal bundle $Q$ and the
action of $G$ on $P$, we can construct the associated bundle over
$N$ with fibre $P$ as follows: the group $G$ acts on $P\times Q$
on the left as $g\cdot (x,q) = (xg^{-1}, gq)$, for all $(x,q)\in
P\times Q$, $g\in G$. Because $G$ acts freely on $Q$, then $G$
acts freely on $P\times Q$. We shall denote the equivalence class
$[(x,q)]$ defined by the orbit of $G$ passing through the pair
$(x,q)$ simply as $xq$, i.e.,
$$xq = \set{(xg^{-1},gq) \in P\times Q \mid g \in G } .$$
Notice that with this notation we have the ``associative'' property $(xg)q = x(gq)$ for all $x\in P$, $g\in G$ and $q \in Q$.

The projection $\pi_2$ induces another projection $\pi_P\colon
P\times_G Q \to N$, $\pi_P(xq) = \pi_2(q)$. Then it is clear that
$P\times_G Q$ is a fiber bundle over $N$ with fiber $P$ and
projection $\pi_P$. Given a point $q\in Q$, there is a natural map
$i_q\colon P \to P\times_G Q$, defined by $i_q(x) = xq$ that maps
$P$ into the fibre of $\pi_P$ over the point $\pi_2(q)$.

Besides $\pi_P$ there is another natural projection $\pi_Q$ on
$P\times_G Q$ induced by the projection $\pi_1\colon P\to M$,
and defined by $\pi_Q(xq) =
\pi_1(x)$. Again, for any $x\in P$, there is a natural map
$i_x\colon Q\to P\times_G Q$ defined by $i_x(q) = xq$, that maps
$Q$ into the fibre of $\pi_Q$ over the point $\pi_1(x)$.  The diagramme below
summarizes the spaces and projections introduced above.

$$
{ \bfig\xymatrix{ & P \times Q\ar[dl]_{p_1}\ar[dr]^{p_2}\ar[d]_\Pi & \\ P\ar[d]_{\pi_1} & P\times_G Q\ar[dl]^{\pi_Q}\ar[dr]_{\pi_P} & Q\ar[d]^{\pi_2}  \\ M & & N
}\efig} 
$$

Tangent vectors $v\in T_{xq}(P\times_GQ)$ can be nicely described
as follows. Let $x(t)q(t)$ be a curve such that $x(0)q(0)= x_0q_0$
and $d/dt(x(t)q(t))\mid_{t= 0} = v$. Then it is not difficult to
show that $v =  T_qi_x(\dot{q}) + T_xi_q(\dot{x})$.   
We will use a simplified and convenient notation for tangent vectors
$v\in T_{xq} (P\times _G Q)$ following the conventions above, and
we simply denote $x\dot{q} := T_qi_x(\dot{q})$ and $\dot{x}q :=
T_xi_q(\dot{x})$. With this notation we have:
\begin{equation}\label{dot_xq}
 \dot{\overline{xq}} := \left.\frac{d}{dt} \Big(x(t)q(t) \Big) \right|_{t = 0} = x\dot{q}+ \dot{x}q.
 \end{equation}

\subsection{Induced connections on the associated bundle $P\times_G Q$}\label{associated_connections}

Recall that a principal connection $B$ on $Q$ is
characterized by its vertical and horizontal spaces at $q\in Q$.
They are denoted respectively by $V_q=\ker T_q\pi_2$, $H_q=\ker B_q$,
and they provide a
decomposition $T_qQ = H_q \oplus V_q$.   Notice that  the map
$T_q\pi_2\colon H_q\to T_{\pi_2(q)}N$ is an isomorphism.  
We denote by $H(TQ)=\bigcup_{q\in
Q}H_q$ and $V(TQ)=\bigcup_{q\in Q}V_q$ the corresponding invariant subbundles under
the action of $G$ on $Q$.  Then we have $TQ=H(TQ)\oplus V(TQ)$. 

The vertical and horizontal components of a vector $v\in T_qQ$
will be denoted by $V(v)$ or $v^V$, and $H(v)$ or $v^H$ respectively. By definition,
$V(v)=B_q(v)_Q$ and $H(v)=v-B_q(v)_Q$.  A tangent vector $v$ is
called horizontal if its vertical component is zero; i.e., if
$B_q(v)=0$.  The vector $v$ is called vertical if its horizontal component
is zero; i.e., $T_q\pi_2(v)=0$. A curve $q(t)$ will be said to be
horizontal if $\dot{q}(t)$ is horizontal for all $t$.  Hence, a
curve $q(t)$ on $Q$ is horizontal if $B_{q(t)}(\dot{q}(t))=0$, for all $t$.

Given a vector $X\in T_{\pi_2(q)}N$ the horizontal lift $X_q^H$ of
$X$ at $q$ is the unique horizontal vector in $T_qQ$ such that
$T_q\pi_2 (X_q^H) = X$.

For any smooth curve $n(t)\in N$, $t\in [0,T]$ and $q_0 \in Q$ with  $\pi_2
(q_0) = n_0 = n(0)$ we define its horizontal
lift $n^h_{q_0}(t)$ as the unique horizontal curve
projecting onto $n(t)$ and such that $n^h_{q_0}(0) = q_0$.

Consider a smooth curve $q(t)\in Q$,$\ t\in\,[0,T]$. Then there is a
unique horizontal curve $q^h(t)$ such that $q^h(t_0)=q(t_0)$ and
$\pi_2(q^h(t))=\pi_2(q(t))$ for all $t\in[0,T]$. Therefore, there is a unique smooth curve $g(t)$, $t\in [0,T]$ in $G$
such that $q(t)=g(t)q^h(t)$.  Also, notice that if we denote by $n(t)=\pi_2
(q(t))$ then $q^h(t) = n_{q_0}^h(t)$.  Then
$\dot{q}(t)=\dot{g}(t)q^h(t) + g(t)\dot{q}^h(t)$ where $\dot{g}(t)$ denotes the 
tangent vector on $G$ to the curve $g(t)$ at time $t$ and $\dot{g}(t)q^h(t)$
will denote the tangent vector $(\dot{g}(t))_Q(g(t)q^h(t))$.  
Similarly, $g(t)\dot{q}^h(t)$ will denote the horizontal tangent vector on $Q$ obtained by translating
the horizontal tangent vector $\dot{q}^h(t)$ by the action of the element of the group $g(t)$.
On the other hand we have the decomposition of the tangent vector
\begin{equation}\label{hor_ver}
\dot{q}(t)=\dot{q}^H(t) + \dot{q}^V(t); \quad \quad \dot{q}^H (t) = g(t) \dot{q}^h(t), \quad \dot{q}^V(t) = \dot{g}(t) q^h(t) .
\end{equation}
By definition of a horizontal vector,
$B_{q(t)}(g(t)\dot{q}^h(t))=0$, thus
$$B_{q(t)}(\dot{q}(t))=B_{q(t)}(\dot{g}(t)q^h(t))
=B_{q(t)}(\dot{g}(t)g^{-1}(t) q(t))=\dot{g}(t)g^{-1}(t) .$$

The principal connection $B$ on $Q$ induces a connection on any associated bundle
by defining the horizontal space to be the space spanned by tangent vectors to all curves of the form $x_0q(t)$ where
$q(t)$ is horizontal.   Such curves will be called horizontal.  This defines a distribution $H^P$ on
$T(P\times_GQ)$, this is $v\in H^P_{xq}$ if there exists a horizontal
curve $\gamma(t)=x(t)q(t)$, $\gamma(0) = xq$ on $P\times_GQ$ such that
$v=\dot{\gamma}(0)$.
Hence, because of the definition of the associated connection, horizontal
vectors in $H^P$ will have the form $x \dot{q}^h$.

Notice that if $\xi\in \goth g$ is an element on the Lie algebra
of $G$, then for each $t$ we have:
$$ (x\cdot \exp (t\xi ) )q = x(\exp(t\xi)\cdot q) . $$
Taking derivatives and using eq. (\ref{xiP}) we obtain,
\Eq{\label{balance} x \xi_Q(q) = - \xi_P(x) q ,} 
where $\xi_Q(q) = d/dt (\exp (t\xi ) \cdot q)\mid_{t = 0}$.

Given $x(t)$ and $q(t)$ curves in $P$ and $Q$ respectively and using eqs. (\ref{dot_xq}) and (\ref{hor_ver}) we
have:
$$\frac{d}{dt} (x(t)q(t))  =\dot{x}(t)q(t)+ x(t)g(t)\dot{q}^h(t) +
x(t)B_{q(t)}(\dot{q}(t))_Q (q(t)) .$$ 
Then the tangent vector
$\dot{\overline{x(t)q(t)}}$ will be horizontal if and only if $$x(t)
B_{q(t)}(\dot{q}(t))_Q (q(t)) + \dot{x}(t)q(t) = 0.$$
Thus we can
define a connection 1-form $B^P$ on the associated bundle
$\pi_Q \colon P\times_G Q \to N$ with values in its vertical subbundle, $V(\pi_Q) = \ker T\pi_Q$, whose kernel is the horizontal distribution
$H^P$ defined above, given by:
\begin{equation}\label{connectionB}
 B_{xq}^P (\dot{\overline{xq}}) = x B_q(\dot{q})_Q(q) + \dot{x}q = (\dot{x} - B_q
(\dot{q})_P(x))q ,
\end{equation}
where we have used eq. (\ref{balance}) in the last equality of the previous formula.

\subsection{Horizontal curves in $P\times Q$}

We will denote the space of smooth curves in $P$ with fixed origin
$x_0$ by $\Omega_{x_0}(P)$ and the space of smooth curves with fixed
endpoints $x_0,x_T$ by $\Omega_{x_0,x_T}(P)$. Likewise the space
of smooth curves in $Q$ with origin $q_0$ will be denoted by
$\Omega_{q_0}(Q)$ and the space of smooth curves with fixed endpoints
$q_0,q_T$ will be denoted by $\Omega_{q_0,q_T}(Q)$.   All these spaces of curves
define regular submanifolds of the Hilbert manifold of curves of Sobolev class $k\geq 1$
on $P$ or $Q$ respectively as it is discussed in Appendix B.

As it was stated above it is clear that given a curve $q(t)$ in $Q$ there is
a unique decomposition $q(t)=g(t)q^h(t)$, where $g(0)=e$ and
$q^h(t)$ is horizontal with respect to the connection $B$, i.e.,
$B_{q(t)}(\dot{q}_h(t)) = 0$.  

Given a curve $q(t)\in \Omega_{q_0}(Q)$ and $x_0\in P$, there exists a unique curve
denoted $x^h(t)$ in $\Omega_{x_0}(P)$ such that $x^h(t) q(t)$ is
horizontal with respect to the induced  connection $B^P$ on $P\times_G Q$
and satisfying $x^h(0)q(0) = x_0 q_0 $. It is easily seen that this curve is
defined by $x^h(t)=x_0 g^{-1}(t)$, because
$$ x^h(t)q(t) = x_0 g^{-1}(t)q(t) = x_0 {g}^{-1}(t)g(t)q^h(t)= x_0 q^h(t) ,$$
which is horizontal.  The space of horizontal curves $x(t)q(t)$ in $P\times_GQ$ 
with respect to the affine connection $B^P$ with initial value $x_0q_0$ will be 
denoted as $\Omega^H_{x_0q_0}(P\times_GQ)$ as it is a regular submanifold of
the space of curves $\Omega_{x_0q_0}(P\times_GQ)$ as it is discussed in Appendix B.

Similarly we will denote by
$\Omega^H_{x_0;q_0}(P\times Q)$ the set of  smooth curves
$(x(t),q(t))$ with domain $[0,T]$, $q(t)\in \Omega_{q_0}(Q)$,
$x(t)$ with initial point $x_0$ and such that $x(t)q(t)$
is horizontal for all $t$.  We will call such space the space of horizontal curves in $P\times Q$
with initial points $x_0, q_0$.  Notice that if $(x(t), q(t)) \in \Omega^H_{x_0,q_0}(P\times Q)$, then
the curve $(x(t)g(t)^{-1}, g(t)q(t))$ is horizontal too for any curve $g(t)$ on $G$ such that $g(0) = e$.
Thus the natural projection $\Pi\colon \Omega^H_{x_0;q_0}(P\times Q) \to \Omega_{x_0q_0}^H(P\times_GQ)$
defined as $\Pi(x(t),q(t)) = x(t)q(t)$ is a principal fibration with structure group the group of smooth curves $\Omega_e(G)$ on
$G$ starting at the neutral element $e$. 

Thus the assignment $q(t) \mapsto (x(t) = x_0g^{-1}(t), q(t))$ determines a one--to--one correspondence among the space $\Omega_{q_0}(Q)$ of smooth curves starting at $q_0$ and the space $\Omega^H_{x_0;q_0}(P\times Q)$ of horizontal curves  above.   

Finally given a curve $(x(t), q(t))$ in $\Omega^H_{x_0;q_0}(P\times Q)$, we have the curves $n(t) = \pi_2(q(t))$ in $\Omega_{n_0}(N)$ and $x(t)$ in $\Omega_{x_0}(P)$.    If the group $G$ acts transitively on $P$
it is easy to see that such correspondence is surjective because given $n(t)$ and $x(t)$ as above, we can define the curve $q(t) = g(t) n_{q_0}^h(t)$ where $x(t) = x_0g^{-1}(t)$.   The curve $q(t)$ thus constructed is in  $\Omega^H_{x_0;q_0}(P\times Q)$.
Notice that $g(t)$ exists because of the transitivity of the action of $G$ on $P$, however it is not unique.
 
This last requirement
implies that $x(t)g(t)  = x_0$, where $q(t) = g(t) q_h(t)$, and
that $x_T = x(T) = x_0 g^{-1}(T) = x_0 g_T^{-1}$, where we are denoting $g(T) = g_T$.

\section{Lin constraints and spaces of horizontal curves}\label{lin_cons}

\subsection{Compatible end-point conditions}

Given an initial condition $x_0 \in P$ for the control system
(\ref{xiS}), we will denote by $x_0G$ the orbit of the group $G$ on $P$
passing through $x_0$, i.e., $x_0G = \set{x_0g \in P \mid g \in
G}$.   

Given a smooth curve of controls $u(t)$, we denote as in eq. (\ref{xi_P}),
by $\xi_P(t;u)$ the time-dependent vector field on $P$ defined by
$\sum_{a = 1}^r u^a(t) X_a$ and by $x(t;u)$ the corresponding integral curve starting at $x_0$. 
The end-point $x_T= x(T;u)$  will lie in the orbit $x_0G$ for any finite $T$.
In fact,  there exists a (in general non-unique) smooth curve $\xi(t)$ on the Lie algebra
$\goth g$ of $G$ such that $\xi(t)_P = \xi_P(t;u)$. 
The non-uniqueness of the choice of the lifted curve $\xi(t)_P$
depends on the isotropy algebra ${\goth g}_{x(t)}$ along the
integral curves of the control vector field.  Thus if we denote by $\mathfrak{g}_t$ the subalgebra of  the Lie algebra $\mathfrak{g}$ generated by the elements $\xi$ such that $\xi_P =   \xi_P(t;u)$ for the given time $t$, then the collection of all $\mathfrak{g}_t$, $0\leq t \leq T$, defines a trivial bundle over $[0,T]$ as well as the collection of all ${\goth g}_{x(t)}$, $0\leq t \leq T$, and all that it takes is to choose a smooth section of their quotient bundle.

Then we integrate the differential equation
$$ \dot{g}(t) = \xi(t)g(t)$$
on the group $G$ with initial condition $g(0) = e$ on the interval $[0,T]$.   Denoting by
$g(t;u)$ such integral curve, we then get that $x_T = x_0 g(T;u)$ proving that
$x_T \in x_0G$.   The integral curve $g(t;u)$ is given explicitly in terms of the chronological exponential map as $g(t;u) = \mathrm{Exp}(\int_0^T \xi(t) dt )$.  (Notice that such integral exists on compact sets because the curve $\xi(t)$ is smooth, hence continuous.)

 The simple computation below shows that the curve $x(t) = x_0
g(t;u)$ is an integral curve of the control system (\ref{xiS})
with initial condition $x_0$,
$$ \frac{d}{dt}{x(t)} = \frac{d}{dt}({x_0 g(t;u)}) =x_0\dot{g}(t;u)=x_0(\xi(t)g(t;u))=\xi_P(t;u)(x(t)) ,$$
with $x_0 g(0;u) = x_0e = x_0$.

Thus for the endpoint $x_T$ to be accesible from $x_0$ it is
necessary that $x_T\in x_0G$. If $x_T \in x_0G$, we will also say
that the endpoints $x_0$ and $x_T$ are compatible.

However not any point $x_T \in x_0G$ is accessible from $x_0$ for general $G$ even though for connected groups, if $x_0, x_T$ are compatible, then $x_T$ is accesible from $x_0$. In fact the following proposition 
can be proved easily:

\begin{proposition}
If $G$ is connected, any point $x_T\in x_0G$ is
accesible from $x_0$.
\end{proposition}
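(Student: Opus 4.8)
The plan is to reverse the construction carried out in the paragraph immediately preceding the statement. Fix $g\in G$ with $x_T=x_0g$ (possible because $x_T\in x_0G$). Since $G$ is a connected smooth manifold it is smoothly path-connected, so one can choose a smooth curve $g\colon[0,T]\to G$ with $g(0)=e$ and $g(T)=g$. Its right logarithmic derivative $\xi(t):=\dot g(t)g(t)^{-1}\in\goth g$ is then a smooth curve in the Lie algebra, and expanding $\xi(t)=u^a(t)\xi_a$ in the chosen basis produces a smooth curve of controls $u(t)=(u^1(t),\dots,u^r(t))$.

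By construction $\dot g(t)=\xi(t)g(t)$, so the computation displayed just above the statement applies verbatim: the curve $x(t):=x_0g(t)$ satisfies $\dot x(t)=\xi_P(t;u)(x(t))$ with $x(0)=x_0e=x_0$, hence it is exactly the integral curve $x(t;u)$ of the state equation (\ref{xiS}) issued from $x_0$ with these controls. Consequently $x(T;u)=x_0g(T)=x_0g=x_T$, which is precisely the assertion that $x_T$ is accessible from $x_0$ in time $T$.

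The only step needing a word of justification is the existence of the smooth path from $e$ to $g$. One may either invoke the general fact that a connected topological manifold is path-connected and that continuous paths in a smooth manifold can be replaced by smooth ones (smoothing in charts), or, more in the spirit of this section, use that a connected Lie group is generated by $\exp(\goth g)$, so $g=\exp(\eta_1)\cdots\exp(\eta_k)$ for some $\eta_j\in\goth g$; one then glues the segments $t\mapsto\exp(t\eta_j)$ over consecutive subintervals of $[0,T]$ after reparametrising each by a smooth function that is flat at its endpoints, obtaining a $C^\infty$ curve $g(t)$. I expect this elementary smoothing — rather than anything dynamical — to be the only mild technical point; everything else is a direct transcription of the preceding paragraph. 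It is worth \emph{noting} that connectedness enters only here: without it the product-of-exponentials representation fails, and an endpoint $x_T\in x_0G$ need not lie on any path of group elements starting at $e$, so it need not be accessible.
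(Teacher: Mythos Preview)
Your proof is correct and follows essentially the same route as the paper's: both arguments use that a connected Lie group is (smoothly) arc-connected to produce a curve $g(t)$ from $e$ to the target group element, then read off the control from its logarithmic derivative and invoke the computation preceding the statement to see that $x_0g(t)$ (or $x_0g^{-1}(t)$) solves the state equation with the desired endpoints. The only cosmetic difference is that the paper packages the resulting control through the connection $B$ and an auxiliary curve $q(t)=g(t)q_0$ in $Q$, landing in the $TQ$-formulation of the control system, whereas you work directly with the Lie-algebra coefficients $u^a(t)$; your version is in fact a cleaner literal reversal of the preceding paragraph.
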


\begin{proof} If the Lie group $G$ is connected then is arc--connected.  Take now
any smooth curve $g(t)$ such that $g^{-1}(0) = e$ and $g^{-1}(T) =
g^{-1}_T$ where $x_T = x_0 g^{-1}_T $. Now the curve $x(t) = x_0
g^{-1}(t)$ satisfies that $x(0) = x_0$ and $x(T) = x_T$.
Moreover,\\ $\dot{x}(t)q(t) = - x(t)\dot{g}(t) g^{-1}(t)q(t) =-
x(t)B_{q(t)}(\dot{q}(t))_Q(q(t))=B_{q(t)}(\dot{q}(t))_P(x(t))q(t)$
where $q(t) = g(t) q_0 $.   Consider then the vector field $\xi_P(t;u)) = B_{q(t)}(\dot{q}(t))_P(x)$ 
where $u(t) = (q(t), \dot{q}(t))$.  Now $\xi(t) = B_{q(t)}(\dot{q}(t))$ and the proposition is proved.
\end{proof}

\subsection{Spaces of horizontal curves with fixed end--point conditions}

Let us fix $x_0, x_T$ two compatible endpoints in $P$.  Let $g_T \in G$ be such that $x_T = x_0g_T^{-1}$. 
If $G_{x_0}$ denotes the isotropy group of $x_0$, i.e., $ G_{x_0}= \set{h \in G \mid x_0h = x_0 }$,  notice that for any $h \in G_{x_0}$, the group element $g_T ' = g_T h$
defines the same end-point $x_T = x_0 g_T'^{-1}$.

Consider a point $q_0\in Q$.  Thus if $q(t)$ is a
curve on $Q$ such that $x(t)q(t)$ is horizontal with respect to the affine connection
$B^P$, eq. (\ref{connectionB}), induced by the principal connection $B$ on $Q$, we have $x(t)q(t) = x_0
q^h(t)$. Thus $x_T q(T)=x_0 q^h(T)$, hence $x_0 g^{-1}_T q(T)=x_0
q^h(T)$ and $g^{-1}_T q(T) =g_0 q^h(T)$, $g_0 \in G_{x_0}$.
Moreover if $g(t)$ is the curve on $G$ such that $q(t) = g(t) q^h(t)$, then  $q(T) = g(T)q^h(T)$, and as $G$ acts freely on $Q$, we will conclude that $g(T) \in g_T G_{x_0}$.

Thus given $x_0$, $x_T = x_0g_T^{-1}$, and denoting $q^h(T)$ by $q^h_T$ for the curve $q(t) \in \Omega_{q_0, g_T G_{x_0} q_T^h} (Q)$, i.e., such that $q(0) = q_0$ and $q(T) \in g_T G_{x_0} q_T^h$, we can associate to it a unique curve in $\Omega^H_{x_0,x_T;q_0}(P\times Q)$ by means of the
natural correspondence $q(t) \mapsto (x(t),q(t))$, where $x(t) =
x_0 g^{-1}(t)$ and $q(t) = g(t)q^h(t)$ as before. Then $x(0) = x_0
g^{-1}(0) = x_0$ and $x(T) = x_0 g^{-1}(T) =x_0 h^{-1}g_T^{-1}=
x_0 g_T^{-1}= x_T$, $h\in G_{x_0}$.

Finally, notice that the space of horizontal curves $\Omega^H_{x_0,x_T;q_0}(P\times Q)$ is a closed submanifold of the space of horizontal curves $\Omega^H_{x_0;q_0}(P\times Q)$ obtained as the level set $\mathrm{ev}_T^{-1}(x_T)$, of the evaluation map $\mathrm{ev}_T\colon \Omega^H_{x_0;q_0}(P\times Q) \to P$, $\mathrm{ev}_T (x,q) = x(T)$, 

The previous remarks and commentaries can be summarized in the
following:

\begin{proposition}\label{equivalence1}   Let $Q$ be a left--principal $G$-bundle, 
$\pi_2\colon Q \to N$, with connection $B$ and
$P$ a right $G$-space.   Let $g_T\in G$, and two compatible points $x_0, x_T = x_0 g_T^{-1}$ in $P$, 
$q_0 \in Q$, $n_0 = \pi_2(q_0)$ and $n_T\in N$.   Then there is a
one-to-one correspondence among the space of parametrized smooth horizontal curves
$(x(t), q(t)) \in \Omega^H_{x_0,x_T;q_0}(P\times Q)$ and the set of curves
$q(t) \in \Omega_{q_0,g_T G_{x_0} q_T^h}(Q)$, where $q_T^h  = n_{q_0}^h(T)$,
$n_{q_0}^h(t)$ being the horizontal lifting starting at $q_0$ of $n(t)$, and $G_{x_0}$ the isotropy group of $x_0$. 
There is also a surjective correspondence among the set of horizontal curves $(x(t), q(t)) \in \Omega^H_{x_0,x_T;q_0}(P\times Q)$ and the set of curves $(x(t),n(t)) \in \Omega_{x_0,x_T}(x_0G)\times \Omega_{n_0,n_T}(N) $ where $x_0G \subset P$ is the $G$--orbit of $x_0$.   If $G_{x_0} = e$, then the later correspondence is one--to--one too.

\end{proposition}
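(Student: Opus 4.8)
The plan is to establish the three correspondences in turn, building on the construction $q(t) \mapsto (x(t), q(t))$ with $x(t) = x_0 g^{-1}(t)$, $q(t) = g(t) q^h(t)$ already introduced before the statement. For the first correspondence, I would verify that this assignment is well-defined as a map from $\Omega_{q_0, g_T G_{x_0} q_T^h}(Q)$ into $\Omega^H_{x_0, x_T; q_0}(P \times Q)$: given $q(t)$ starting at $q_0$ and ending in the coset $g_T G_{x_0} q_T^h$, the decomposition $q(t) = g(t) q^h(t)$ with $g(0) = e$ is unique, hence $x(t) = x_0 g^{-1}(t)$ is well-defined, $x(t) q(t) = x_0 q^h(t)$ is horizontal with respect to $B^P$ as computed earlier, $x(0) = x_0$, and the endpoint condition $q(T) \in g_T G_{x_0} q_T^h$ forces $g(T) \in g_T G_{x_0}$ (using freeness of the $G$-action on $Q$ and that $q^h(T) = q_T^h$), whence $x(T) = x_0 g^{-1}(T) = x_0 h^{-1} g_T^{-1} = x_0 g_T^{-1} = x_T$ for some $h \in G_{x_0}$, exactly as in the paragraph preceding the proposition. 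The inverse map is simply $(x(t), q(t)) \mapsto q(t)$; that it lands in the prescribed space of curves is the content of the computation in the preceding subsection showing $g(T) \in g_T G_{x_0}$, and that the two maps are mutually inverse is immediate from uniqueness of the decomposition $q = g q^h$.

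For the second correspondence, I would send a horizontal curve $(x(t), q(t))$ to the pair $(x(t), n(t))$ with $n(t) = \pi_2(q(t))$; by construction $x(t) \in x_0 G$ for all $t$ (it equals $x_0 g^{-1}(t)$), $x(0) = x_0$, $x(T) = x_T$, and $n(0) = n_0$, $n(T) = n_T$, so the target space is correct. For surjectivity, given $(x(t), n(t)) \in \Omega_{x_0, x_T}(x_0 G) \times \Omega_{n_0, n_T}(N)$, write $x(t) = x_0 g^{-1}(t)$ for some smooth curve $g(t)$ in $G$ with $g(0) = e$ — here one needs that $x(t)$ stays in the single orbit $x_0 G$ and that a smooth lift $g(t)$ exists, which follows because $P \to P/G$ is a principal-type submersion onto $M$ with the orbit $x_0 G$ a submanifold, so a smooth section over the curve can be chosen (this is the same argument used for transitive actions in Section \ref{horizontal_curves}, now applied inside one orbit). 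Then set $q(t) = g(t) n_{q_0}^h(t)$; this is smooth, starts at $q_0$, projects to $n(t)$, and $x(t) q(t) = x_0 g^{-1}(t) g(t) n_{q_0}^h(t) = x_0 n_{q_0}^h(t)$ is horizontal, so $(x(t), q(t)) \in \Omega^H_{x_0, x_T; q_0}(P \times Q)$ and maps to $(x(t), n(t))$.

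The main obstacle is the non-uniqueness in the second correspondence and pinning down exactly when it becomes a bijection. The ambiguity in recovering $q(t)$ from $(x(t), n(t))$ is precisely the ambiguity in the lift $g(t)$ of $x(t) = x_0 g^{-1}(t)$: two such lifts differ by a curve in the isotropy group $G_{x_0}$ starting at $e$, so when $G_{x_0} = e$ the lift $g(t)$ is unique and the map is injective, giving the final one-to-one claim; otherwise distinct curves in $\Omega_e(G_{x_0})$ produce distinct $q(t)$ with the same image, and one should note this matches the fibre structure of $\Pi$ restricted appropriately. I would phrase this carefully by remarking that the fibre of the second correspondence over $(x(t), n(t))$ is a torsor under $\Omega_e(G_{x_0})$, so the correspondence is a bijection precisely when $G_{x_0}$ is trivial; the surjectivity holds unconditionally by the explicit construction above. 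Finally I would observe that these maps are not merely set bijections but restrict/corestrict the smooth structures on the relevant spaces of Sobolev curves discussed in Appendix B, so "correspondence" can be upgraded to "diffeomorphism" on each piece, though the bare statement only asserts the set-theoretic version.
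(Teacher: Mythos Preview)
Your proposal is correct and follows essentially the same approach as the paper: the map $\alpha(q)=(x_0 g^{-1}(t),q(t))$ with inverse $(x,q)\mapsto q$, and the map $\beta(x,q)=(x,\pi_2(q))$ with right inverse $(x,n)\mapsto g(t)n^h_{q_0}(t)$, are exactly the constructions the paper uses, and your endpoint verification and identification of the ambiguity with curves in $G_{x_0}$ match the paper's reasoning. Your added remarks on the torsor structure of the fibres and the upgrade to a smooth correspondence go a bit beyond what the paper's brief proof states, but they are correct refinements rather than a different strategy.
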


\begin{proof}  The correspondence $\alpha \colon \Omega_{q_0,g_T G_{x_0} q_T^h}(Q) \to  \Omega^H_{x_0,x_T;q_0}(P\times Q)$ is given by:
\begin{equation}
\alpha(q(t))= ( x(t), q(t)) ,
\end{equation}
with $x(t) = x_0 g^{-1}(t)$ and $q(t) =  g(t) q^h(t)$,
and the correspondence $\beta\colon  \Omega^H_{x_0,x_T;q_0}(P\times Q) \to  \Omega_{x_0,x_T}(x_0G)\times \Omega_{n_0,n_T}(N)$, is explicitly given as: 
\begin{equation}
\beta((x(t),q(t)))  =   (x(t),\pi_2(q(t))) .
\end{equation}
It is clear that the map $\alpha$ is bijective, the inverse given simply by $(x(t),q(t)) \mapsto q(t)$.
From the definition is clear that $\beta$ is surjective.  A right inverse of the map $\beta$ is given by
$(x(t),n(t)) \mapsto q(t) = g(t)n^h_{q_0}(t)$, where $x(t) = x_0 g(t)$.   Now, if $G_{x_0} = e$, then the 
curve $g(t)$ in $G$ is uniquely determined and the right inverse of $\beta$ is unique.
\end{proof}

\subsection{Lin constraints}

Since $B^P\left(\frac{d}{dt}(x(t)q(t))\right)=0$ is equivalent to $x(t)q(t)$
being horizontal, it follows that $\Omega^H_{x_0,x_T;q_0}(P\times
Q)$ is the subset of $\Omega_{x_0,x_T;q_0}(P\times Q)$ defined by the constraint
$B^P(\frac{d}{dt}(x(t)q(t)))=0$. This constraint is called a
Lin constraint \cite{Ce87a}. Now we will introduce the constraint
in the variational principle using a Lagrange multiplier (see Apendixes A, B). Using
the costate space $T^*P$ we will allow arbitrary
variations of the curves along the vertical directions on $T^*P$
and the Lagrange multiplier will have the form:
\begin{equation}\label{lin_constraint}
\left\langle B^P\left(\frac{d}{dt}\left(x(t)q(t)\right)\right),p(t)q(t) \right\rangle
\end{equation}
with $p(t)$ being any lifting of the curve $x(t) \in
\Omega_{x_0,x_T}(P)$. Notice that the action of $G$ on $P$ can be
lifted naturally to an action of $G$ on $T^*P$, then we consider the
quotient space $T^*P\times_G Q$ as in Section \ref{associated}. We
take the curve $p(t)$ as a curve in $T^*P$ with endpoints
lying on $\tau_P^{-1}(x_0)$ and $\tau_P^{-1}(x_T)$, this is $p(t)$ is a curve
in $T^*P$ with free endpoints along the fibers of the canonical
projection $\tau_P \colon T^*P \rightarrow P$ projecting over the points $x_0$ and $x_T$. 
Because of eq. (\ref{connectionB}) the horizontal Lin
constraint eq. (\ref{lin_constraint}) can be written in terms of the canonical
Liouville 1--form $\theta_P$ on $T^*P$ as

\begin{eqnarray} \label{LC}
\left\langle B^P \left(\frac{d}{dt}(xq)\right),pq \right\rangle  &=& \left\langle
(\dot{x} - B_q(\dot{q})_P(x))q,pq \right\rangle \nonumber \\
 &=& -{\theta_P}{(x,p)} (\dot{x}, \dot{p}) - \langle J(x,p),
 B_q(\dot{q}) \rangle .
\end{eqnarray}

Finally, consider $L$ to be a $G$-invariant Lagrangian on $Q$,
this is, $L(q,\dot{q})=L(gq,g\dot{q})$ with $(q,\dot{q}) \in TQ$
and $g\in G$. As indicated above, the action of $G$ on $P$ also
permits us to define the associated bundle $T^*P\times_G Q$ over
$N$ with fiber $T^*P$. We now define the Lagrangian $\L$ on
$T(T^*P\times Q)$ by the formula:
\begin{equation}
\L (x, p, \dot{x},\dot{p}, q,\dot{q}) = L(q,\dot{q}) - \left\langle
B^P\left(\frac{d}{dt}\left(xq\right)\right),pq \right\rangle ,      \label{LM1}
\end{equation}
or using the formulas previously obtained for $B^P$ we get:
\begin{eqnarray}
\L (x, p, \dot{x},\dot{p}, q,\dot{q}) & = & L(q,\dot{q})- \left\langle
B^P\left(\frac{d}{dt}(xq)\right), pq \right\rangle \nonumber \\
\mbox{} & = & L(q,\dot{q}) + \langle J(x,p),B_q(\dot{q}) \rangle +
\theta_P(x,p)(\dot{x},\dot{p})                 \label{LM2}
\end{eqnarray}
which coincides with the Clebsh representation of the Lagrangian $L$ given in (\ref{LClebsch}).


\section{The equivalence with Lagrangian systems with symmetry}\label{main}

Now we can make precise the correspondence
between critical points of $L$ and those of $\L$.
The first result we will present is that the critical points of a Lagrangian system with symmetry $L$
can be obtained as critical points of the Clebsh Lagrangian representation of $L$ given above (\ref{LM2}).  More precisely:

\begin{theorem}\label{thClebsh}
Let $g_T\in G$, $q_0 \in Q$, $x_0\in P$, $x_T = x_0
g_T^{-1} \in P$, and $q_T=g_T q_0 \in Q$.  Then the following assertions are
equivalent:

\begin{itemize}

\item[i.-] The smooth curve $q(t)\in \Omega_{q_0,g_T G_{x_0} q_T^h}(Q)$ is a critical point of
the functional $S_L\colon \Omega_{q_0,g_T G_{x_0} q_T^h}(Q) \rightarrow
{\R}$ defined by
\begin{equation} \label{simpleL}
S_L[q] = \int_0^T L(q,\dot{q}) dt
\end{equation}

\item[ii.-] There is a smooth curve $(x(t), p(t))\in \Omega_{x_0,x_T}(T^*P)$ such that the
curve $(x(t), p(t), q(t))$ is a critical point of the functional
$S_{\L} \colon \Omega_{x_0,x_T;q_0} (T^*P \times Q) \to \R$
defined by:

\begin{eqnarray} \label{VP2} 
S_{\L} [x,p,q] &=& \int_0^T \L
(x(t),p(t), \dot{x}(t), \dot{p}(t), q(t), \dot{q}(t)) dt \nonumber \\
\mbox{} & = & \int_0^T(L(q,\dot{q}) + \langle J(x,p),B_q(\dot{q})
\rangle  + {\theta_P}{(x,p)}(\dot{x},\dot{p}))dt .
\end{eqnarray}
\end{itemize}

\end{theorem}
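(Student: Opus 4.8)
The plan is to derive this equivalence from Theorem~\ref{thOpti}, applied to a Lagrangian density that does not depend on the $P$-variables, together with the reduction from horizontal curves to curves on $Q$ furnished by Proposition~\ref{equivalence1}. The proof is then just a chain of three identifications --- it follows closely the strategy of \cite{Ce87b} adapted to the present bundle picture --- and none of its steps requires a computation beyond the formulas already established above.

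The first step is the observation that, by eq.~(\ref{connectionB}),
$$B^P\!\left(\frac{d}{dt}\big(x(t)q(t)\big)\right) = \big(\dot{x}(t) - B_{q(t)}(\dot{q}(t))_P(x(t))\big)\,q(t) ,$$
so that a pair $(x(t),q(t))$ lies in the space of horizontal curves $\Omega^H_{x_0,x_T;q_0}(P\times Q)$ precisely when it satisfies the control equation $\dot{x}=\xi(q,\dot{q})_P(x)$ of eq.~(\ref{controlrepres}), with $\xi(q,\dot{q})=B_q(\dot{q})$, together with $x(0)=x_0$, $x(T)=x_T$, $q(0)=q_0$. Hence, taking in Theorem~\ref{thOpti} the density $L(x,q,\dot{q}):=L(q,\dot{q})$ --- that is, the $G$-invariant Lagrangian $L$ on $TQ$ regarded as a function on $P\times TQ$ by pullback --- assertion (i) of that theorem becomes exactly the statement that $(x(t),q(t))$ is a critical point of $\int_0^T L(q,\dot{q})\,dt$ on $\Omega^H_{x_0,x_T;q_0}(P\times Q)$. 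Moreover, since $\theta_P = p_\alpha\,dx^\alpha$, the value $\theta_P(x,p)(\dot{x},\dot{p})$ depends only on $\dot{x}$ and equals $\langle \theta_P(x,p),\dot{x}\rangle$, so the functional in assertion (ii) of Theorem~\ref{thOpti} for this same density coincides term by term with the functional $S_{\L}$ of eq.~(\ref{VP2}). Thus Theorem~\ref{thOpti} already gives the equivalence: $(x(t),q(t))$ is a critical point of $\int_0^T L(q,\dot{q})\,dt$ on $\Omega^H_{x_0,x_T;q_0}(P\times Q)$ if and only if there is a smooth lift $p(t)$ of $x(t)$ to $T^*P$ such that $(x(t),p(t),q(t))$ is a critical point of $S_{\L}$ on $\Omega_{x_0,x_T;q_0}(T^*P\times Q)$ --- which is precisely assertion (ii) of the present theorem.

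It remains to pass from critical points of $\int_0^T L(q,\dot{q})\,dt$ on $\Omega^H_{x_0,x_T;q_0}(P\times Q)$ to critical points of $S_L$ on $\Omega_{q_0,g_T G_{x_0} q_T^h}(Q)$, which is exactly what Proposition~\ref{equivalence1} delivers. I would invoke the bijection $\alpha\colon \Omega_{q_0,g_T G_{x_0} q_T^h}(Q)\to \Omega^H_{x_0,x_T;q_0}(P\times Q)$, $\alpha(q(t))=(x_0 g^{-1}(t),q(t))$ with $q(t)=g(t)q^h(t)$, whose inverse is the projection $(x(t),q(t))\mapsto q(t)$ and which is in fact a diffeomorphism of the relevant Hilbert manifolds of curves (Appendix~B). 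Because the density $L(q,\dot{q})$ depends only on the $Q$-component, $S_L\circ\alpha = S_L$, so $\alpha$ intertwines the two functionals and therefore carries critical points to critical points and conversely; concatenating this with the previous equivalence gives (i) $\Leftrightarrow$ (ii). Note that the endpoint condition $q(T)\in g_T G_{x_0} q_T^h$ in (i) is not an extra assumption but precisely the restriction that horizontality of $x(t)q(t)$ together with $x(T)=x_T$ places on $q(T)$, as observed just before Proposition~\ref{equivalence1}.

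Since the substance of the argument is imported, the main obstacle lies in the ingredients invoked rather than in their assembly. On the one hand, Theorem~\ref{thOpti} rests on the infinite-dimensional Lagrange multipliers theorem of Appendix~A, which requires the Lin-constraint map $(x,p,q)\mapsto B^P(\frac{d}{dt}(xq))$ to be a submersion onto the relevant space of vertical sections along the curve, so that the constraint set is a regular submanifold and the multiplier $p(t)$ genuinely exists; it is also here that one must verify that ``$x$ fixed at $x_0,x_T$ with $p$ free along the fibres'' is the natural boundary condition dual to the constraint, so that no endpoint conditions are silently gained or lost when passing between (i) and (ii). On the other hand, the last step requires $\Omega^H_{x_0,x_T;q_0}(P\times Q)$ and $\Omega_{q_0,g_T G_{x_0} q_T^h}(Q)$ to be genuine closed submanifolds of the ambient Hilbert manifolds of Sobolev curves on which $\alpha$ restricts to a diffeomorphism, which is the content of Appendix~B.
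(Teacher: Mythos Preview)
Your proposal is correct and follows essentially the same approach as the paper: both proofs combine the bijection $\alpha$ of Proposition~\ref{equivalence1} between $\Omega_{q_0,g_T G_{x_0} q_T^h}(Q)$ and $\Omega^H_{x_0,x_T;q_0}(P\times Q)$ with the Lagrange multiplier argument of the appendices. The only cosmetic difference is that the paper invokes Theorem~\ref{smooth_LMT} directly for the multiplier step, whereas you route that same step through Theorem~\ref{thOpti} (applied to the $x$-independent density $L(q,\dot{q})$), which is itself proved from the Lagrange multiplier theorem; the order of the two ingredients is also swapped, but the content is identical.
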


\begin{proof}  Because of Prop. \ref{equivalence1} there is a one-to-one correspondence $\alpha$ among the space of curves $\Omega_{q_0,g_T G_{x_0} q^h_T}(Q) $ and the space of horizontal curves $\Omega^H_{x_0,x_T;q_0}(P\times Q)$.  
Thus we may think that $L$ is defined on the space of horizontal curves $\Omega^H_{x_0,x_T;q_0}(P\times Q)$.
Now because of Thm. \ref{smooth_LMT} (Appendix B) a curve $(x(t), q(t)) \in \Omega^H_{x_0,x_T;q_0}(P\times Q)$ is a critical point of $S_L$ iff there exists a smooth lifting $(x(t),p(t), q(t))$ of this curve that is a critical point of the extended functional defined by the Lagrangian (\ref{LM1}) that due to (\ref{LM2}) is the same as the functional (\ref{VP2}).

Conversely, if  the curve $(x(t),p(t), q(t))$ is a critical point of the functional $S_\L$ defined in (\ref{VP2}), then because of
eq. (\ref{LC}), $S_\L$ is just $S_L$ plus the Lagrange multiplier determined by the submanifold of  horizontal curves $\Omega^H_{x_0,x_T;q_0}(P\times Q)$ inside the total space of curves $\Omega_{x_0,x_T;q_0} (T^*P \times Q)$. 
Thus because of the Lagrange multipliers theorem \ref{LMT} (Appendix A) we conclude that $q(t)$ is a critical point of $S_L$
in the space of curves  $\Omega_{q_0,g_T G_{x_0} q^h_T}(Q) $.
\end{proof}

And now, collecting the results obtained in Thm. \ref{thOpti} and Thm. \ref{thClebsh} above,  we can state
the following relation among the extremals of a  Lie-Scheffers-Brockett optimal control problem, the solutions of the corresponding Lagrangian system with symmetry and the critical points of the Clebsh Lagrangian associated to it.

\begin{corollary}\label{LSBC_Thm}  
Given $g_T\in G$, $q_0 \in Q$, $x_0\in P$, $q_T=g_T q_0 \in Q$, and $x_T = x_0 g_T^{-1} \in P$.
Then the following assertions are equivalent:

\begin{itemize}

\item[i.-]  (Lie-Scheffers-Brockett Optimal control system.)
The smooth curve $(x(t),q(t))$
in the subspace $\Omega_{x_0,x_T;q_0} (P\times Q)$ defined by the
equation $\dot{x} = \xi(q,\dot{q})(x)$, is a critical point of the
objective functional:
$$  S [q] = \int_0^T L(q,\dot{q}) dt .$$

\item[i.-] (Invariant Lagrangian system.)
The smooth curve $q(t)\in \Omega_{q_0,g_T G_{x_0} q^h_T}(Q)$ is a
critical point of the functional $S_L: \Omega_{q_0,g_T G_{x_0}
q^h_T}(Q) \rightarrow {\R}$ defined by
\begin{equation}
S_L[q] = \int_0^T L(q,\dot{q}) dt .
\end{equation}

\item[ii.-] (Clebsch Lagrangian system.) There is a smooth curve
$(x(t), p(t))\in \Omega_{x_0,x_T}(T^*P)$ such that the smooth curve
$(x(t), p(t), q(t))$ is a critical point of the functional $S_{\L}
\colon \Omega_{x_0,x_T;q_0} (T^*P \times Q) \to \R$ defined by:

\begin{eqnarray}
 S_{\L} [x,p,q] &=& \int_0^T \L (x(t),p(t), \dot{x}(t),
\dot{p}(t), q(t), \dot{q}(t)) dt \nonumber \\
\mbox{} & = & \int_0^T (L(q,\dot{q}) + \langle J(x,p),B_q(\dot{q})
\rangle + {\theta_P}{(x,p)}(\dot{x},\dot{p}))dt .
\end{eqnarray}

\end{itemize}
\end{corollary}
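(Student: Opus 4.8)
The plan is to deduce the Corollary by chaining the two equivalences already established, taking care only of the bookkeeping that identifies the spaces of curves appearing in each of them. Concretely, the statement has three items: (i) the Lie--Scheffers--Brockett optimal control problem on $\Omega_{x_0,x_T;q_0}(P\times Q)$ with the control constraint $\dot x=\xi(q,\dot q)_P(x)$; (i$'$) the invariant Lagrangian problem on $\Omega_{q_0,g_T G_{x_0}q_T^h}(Q)$; and (ii) the Clebsch problem on $\Omega_{x_0,x_T;q_0}(T^*P\times Q)$. The equivalence (i$'$)$\iff$(ii) is exactly the content of Theorem \ref{thClebsh}, which already uses the objective density $L(q,\dot q)$ and the Clebsch Lagrangian $\L$ of (\ref{LM2}); that part requires no further argument beyond quoting the theorem with the hypotheses $x_T=x_0g_T^{-1}$, $q_T=g_Tq_0$ in force. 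So the real work is to show (i)$\iff$(i$'$).

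For (i)$\iff$(i$'$) the key observation, already recorded in Section \ref{horizontal_curves}, is that a curve $(x(t),q(t))$ in $P\times Q$ satisfies the control equation $\dot x=\xi(q,\dot q)_P(x)=B_q(\dot q)_P(x)$ precisely when the lifted curve $x(t)q(t)$ in $P\times_G Q$ is horizontal with respect to the induced connection $B^P$, by formula (\ref{connectionB}): indeed $B^P_{xq}(\dot{\overline{xq}})=(\dot x-B_q(\dot q)_P(x))q$ vanishes iff the control equation holds. Hence the constrained subspace in item (i) is exactly $\Omega^H_{x_0,x_T;q_0}(P\times Q)$. Now invoke Proposition \ref{equivalence1}: under the compatibility hypothesis $x_T=x_0g_T^{-1}$ there is a one-to-one correspondence $\alpha$ between $\Omega^H_{x_0,x_T;q_0}(P\times Q)$ and $\Omega_{q_0,g_TG_{x_0}q_T^h}(Q)$, sending $q(t)$ to $(x(t)=x_0g^{-1}(t),q(t))$ where $q(t)=g(t)q^h(t)$. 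One must then check that the objective functionals agree under $\alpha$, which is immediate since in item (i) the density is $L(q,\dot q)$ — a function on $TQ$ only — and $\alpha$ does not alter the $Q$-component of the curve; thus $S[q]=\int_0^T L(q,\dot q)\,dt$ is literally the same integral on both sides. Since $\alpha$ is a diffeomorphism of the relevant manifolds of curves (Appendix B), critical points correspond to critical points, giving (i)$\iff$(i$'$).

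Finally one assembles the chain: (i)$\iff$(i$'$) by the connection/Lin-constraint identification plus Proposition \ref{equivalence1}, and (i$'$)$\iff$(ii) by Theorem \ref{thClebsh}; transitivity of ``$\iff$'' yields all three equivalences. I expect the only delicate point to be the precise matching of endpoint conditions: in item (i) the endpoints $x_0,x_T$ are imposed on the $P$-component, with $q$ only anchored at $q_0$, whereas in (i$'$) the terminal condition is the coset condition $q(T)\in g_TG_{x_0}q_T^h$. Proposition \ref{equivalence1} is precisely designed so that these two descriptions coincide — the analysis in Section \ref{lin_cons} shows $x(T)=x_0g^{-1}(T)=x_T$ forces $g(T)\in g_TG_{x_0}$, equivalently $q(T)\in g_TG_{x_0}q_T^h$ — so once that proposition is in hand there is no obstacle, merely the need to state the hypotheses $x_T=x_0g_T^{-1}$ and $q_T=g_Tq_0$ so that all three formulations refer to the same data. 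The variational (Lagrange-multiplier) content has already been isolated in Theorems \ref{thOpti} and \ref{thClebsh}, so nothing new of that kind is needed here.
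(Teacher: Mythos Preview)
Your proposal is correct, but it takes a slightly different route from the paper. The paper simply observes that the Corollary follows by combining Theorem~\ref{thOpti} (which gives (i)$\iff$(ii) directly, via the Lagrange-multiplier interpretation of the costate) with Theorem~\ref{thClebsh} (which gives (i$'$)$\iff$(ii)); transitivity then closes the triangle. You instead prove (i)$\iff$(i$'$) directly, by recognizing the control equation as the horizontality condition $B^P(\dot{\overline{xq}})=0$ and invoking Proposition~\ref{equivalence1} to identify the constrained curve space in (i) with $\Omega_{q_0,g_TG_{x_0}q_T^h}(Q)$, then quote Theorem~\ref{thClebsh} for (i$'$)$\iff$(ii). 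This is perfectly valid---and in fact your argument for (i)$\iff$(i$'$) is essentially the first half of the proof of Theorem~\ref{thClebsh} itself, so you are re-deriving rather than re-using. The paper's route is more economical (two citations, no unpacking), while yours has the advantage of making explicit the geometric reason the optimal-control and invariant-Lagrangian formulations coincide, namely that the control constraint \emph{is} the Lin constraint and $L$ does not depend on $x$.
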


\section{Some applications and examples}\label{examples}

\subsection{Euler rigid body equations}\label{ejem1}

\subsubsection{The group $SO(3)$}
We will discuss first the simple case of Euler's equation for the
rigid body as an optimal control problem that served as a
guiding example for the discussion in \cite{Bl00}.

\vspace{0.2cm}

Let $G = SO(3)$ be the rotation group. As a principal fibre bundle
$Q$ we will consider the group $SO(3)$ acting on the left on
itself. The state space $P$ will be
the Lie group $SO(3)$ again, but this time acting on itself on the
right. The control space for the optimal control representation
problem will be the tangent bundle $TQ = TSO(3) \cong_L SO(3)
\times {\goth {so}}(3)$ where the subscript $L$ indicates that we
have used left translations on $SO(3)$ for the identification.

\vspace{0.2cm}

The Lagrangian density is the kinetic energy $L(q,\omega) = 1/2
\langle \omega, J(\omega)\rangle$, $(q,\omega) \in  SO(3) \times
{\goth {so}}(3)$ defined by a right-invariant metric $J$ on
$SO(3)$.  The connection $B$ on $Q = SO(3)$
will be simply the canonical right-invariant Maurer-Cartan 1-form
(that in the left-invariant representation of $TSO(3)$ above is the
identity matrix).
\vspace{0.2cm}

Thus, computing the vector field $B(\omega)$ on $P$ at
$x$, we get 
$$(B(\omega))_P(x) = d/dt(x \exp (-tB(\omega)))\mid_{t
= 0}=x \omega ,$$ and the control equation (\ref{controlrepres}) will
be:
$$\dot x = (B(\omega))_P(x) = x \omega.$$
Pontryagin's Hamiltonian will take the form:
$$ H(q,\omega, x, p) = \langle p, x \omega \rangle - \frac{1}{2} \langle \omega, J(\omega)\rangle ,$$
where $(q,\omega)\in TQ = TSO(3) \cong SO(3) \times \mathfrak{so}(3)$ and $(x,p)\in T^*P\cong TP \cong_R SO(3) \times \mathfrak{so}(3)$ and Pontryagin's Hamilton equations (\ref{MPP}) are then
$$ \dot x = x \omega , \quad \quad \dot p = p \omega,$$
together with the constraint condition:
$$x^Tp-p^Tx  - J \omega = 0 .$$
Finally,  Euler--Lagrange equations for the Lagrangian $L$ on $TSO(3)$ are easily obtained as:
$$ \dot{g} = g \omega, \quad J\dot{\omega} = [J \omega ,  \omega] . $$

\subsubsection{The group $SU(2)$}

Now we consider as before
$G=Q=P=SU(2)$ where we have replaced the orthogonal
group $SO(3)$ by its universal cover, the special unitary group
$SU(2)$. The Lie group
$$ SU(2) = \set{ g = \matriz{rr}{a & b \\ -\bar{b} & \bar{a}} \mid a,b \in \C, \quad a\bar{a} +b\bar{b} = 1 } $$
has Lie algebra
$$ {\goth {su}}(2) = \set{\xi_+ e_+ + \xi_- e_- + \xi_0 e_0 \mid \xi_+, \xi_-, \xi_0 \in \R } ,$$
with
$$ e_+ = \matriz{rr}{0 & 1 \\ -1 & 0}, \quad e_- = \matriz{rr}{0 & i \\ i & 0}, \quad e_0 = \matriz{rr}{i & 0
\\ 0 & -i}.$$

We shall identify $TQ$ with $SU(2) \times {\goth
{su}}(2)$ by left translations, i.e., $(g, \xi ) \in
SU(2) \times {\goth
{su}}(2)$, corresponds
to the element $(g,\dot{g}) \in TSU(2)$, with
$$ \dot{g} = g \xi.$$
The vector field $\xi_P (x)$ takes the form
 $\xi_P (x) =x \xi$.
Hence the control equation becomes
$$ \dot{x} =x \xi.$$

The objective functional is

\Eq{\label{objcontrol4} S = \frac {1}{4} \int_0^T  \langle \xi,
J(\xi) \rangle dt ,}
where we use the Killing form $\langle \xi, \zeta \rangle = 4\Tr (\xi \zeta )$.
The Euler-Lagrange equations for the Lagrangian
 \\
 \Eq{L(g, \dot{g})=\mitad \langle g^{-1}\dot{g},J( g^{-1}\dot{g})\rangle} are given again by: 
\Eq{\dot{g} = g \xi, \quad    \quad J (\dot{\xi}) = [J(\xi), \xi].} 
The equations of motion given by Pontryagin maximum principle are:
$$ \dot{x} =x \xi , \quad \dot{p} =p \xi ,$$
the relation among both sets of equation is given by:
$$x^\ast p-p^\ast x - J \xi  = 0,$$ 
and taking derivatives with
respect to $t$ we obtain 
$$ \quad  J\dot{\xi} = [J \xi ,\xi].$$


\subsection{Riccati equations}
\subsubsection{ The group $G = SL(2,\R)$ }\label{ricatti_sl2}   
We will consider now the
group $G = SL(2,\R)$ and as with the rigid body example, we will
consider the control space $Q$ the group $SL (2,\R)$ itself. To
keep the conventions held along the paper we will consider the
group $G$ acting on the left on $Q$ by left multiplication. The
Lie group
$$ SL (2,\R ) = \set{ g = \matriz{cc}{a & b \\ c & d} \mid a,b,c,d \in \R, \quad ad -bc = 1 } $$
has Lie algebra
$$ {\goth {sl}}(2, \R ) = \set{\xi_+ e_+ + \xi_- e_- + \xi_0 e_0 \mid \xi_+, \xi_-, \xi_0 \in \R } ,$$
with
$$ e_+ = \matriz{cc}{0 & 1 \\ 0 & 0}, \quad e_- = \matriz{cc}{0 & 0 \\ 1 & 0}, \quad e_0 = \matriz{cc}{1 & 0
\\ 0 & -1} ,$$
and nonzero commutation relations,
$$ [e_+, e_-] = e_0, \quad [e_+ , e_0] = - 2 e_+, \quad [e_-,e_0 ] = 2e_- .$$
We shall identify $TQ$ with $SL(2, \R ) \times {\goth {sl}}(2, \R
)$ by left translations, i.e., $(g, \xi ) \in SL(2, \R ) \times
{\goth {sl}}(2, \R )$, corresponds to the element $(g,\dot{g}) \in
TSL(2,\R )$, with
$$ \dot{g} = g \xi .$$

Now we will consider the state space $P = \R$ and $SL(2,\R )$ acting
on it by Moebius transformations, i.e.,
$$ x\cdot g^{-1} = \frac{dx - b}{-cx + a},  \quad \quad g = \matriz{cc}{a & b \\ c & d} \in SL (2, \R ) .$$

If $B$ is a diagonal $\mathfrak{sl}(2,\R)$--valued equivariant 1-form (a slight generalization of a principal connection), i.e., 
$B = \textrm{diag} (B_+,B_-, B_0)$ in the basis $e_+,e_-, e_0$, then the vector field
$\xi_P (x) = B_g (\dot{g})_P(x)$ takes the form:
$$ \xi_P(x) = (\xi_+ B_+ + 2\xi_0 B_0 x - \xi_- B_- x^2)\frac{\partial}{\partial x}.$$
Hence the control equation becomes the Riccati equation:
\Eq{\label{xcontrol} \dot{x} = a(t)x^2+b(t)x+c(t) ,}
with $a(t) = -\xi_- B_- $, $b(t) = 2\xi_0 B_0 $ and $c(t)= \xi_+ B_+$.

\vspace{0.2cm}

To define the optimal control problem we will
consider again the objective functional

\Eq{\label{objcontrol2} S = \int_0^T \mitad \langle \xi, I \xi
\rangle dt ,} where we use the Killing--Cartan form $\langle \xi, \zeta
\rangle = 4\Tr (\xi^T \zeta )$.

\vspace{0.2cm} 

The results discussed along the paper show that
there is a well defined relation among the solutions of the
optimal control problem given by eqs. (\ref{xcontrol}),
(\ref{objcontrol2}) and the critical paths of the Lagrangian
system

\Eq{\label{lagsl2} L(g, \dot{g}) = \mitad \langle g^{-1}\dot{g},I
g^{-1}\dot{g} \rangle } on $TSL(2,\R )$.

\vspace{0.2cm} 

The Euler-Lagrange equations for the Lagrangian
(\ref{lagsl2}) are given by:
\Eq{\label{ecelsl2r} \dot{g} = g \xi ,  \quad I \dot{\xi} = [I \xi, \xi] .}
The equations of motion resulting after applying Pontryagin's maximum principle to the Hamiltonian $H(x, p; g, \dot{g})= \langle p, a x^2+b x+c \rangle - \mitad \langle \xi, I \xi
\rangle$ are:
\Eq{\label{hamsl2r} \dot{x} = \dfrac{\partial H}{\partial p} = a x^2+ b x + c, \quad \dot{p} =-\dfrac{\partial H}{\partial x}= - (2a x+b) p ,}
and the optimal feedback relations are given by:
\begin{eqnarray}\label{feedback} 0 &=& \dfrac{\partial H}{\partial \xi_+}=B_+ p -I_+ \xi_+, \nonumber \\  0 &=&  \dfrac{\partial H}{\partial \xi_-}=-B_- p x^2- I_- \xi_-, \nonumber \\ 0 &=& \dfrac{\partial H}{\partial \xi_0}=2B_0 p x-I_0 \xi_0.
\end{eqnarray}

Substituting in (\ref{hamsl2r}) the values of $\xi_+, \xi_-, \xi_0$ obtained from (\ref{feedback}) we get the following nonlinear Hamiltonian equations:
$$\dot{x}=\Big[\dfrac{B_+^2}{I_+}x^4 + \dfrac{4 B_0^2}{I_0} x^2 + \dfrac{B_-^2}{I_-}\Big]~p, \quad \dot{p}=-\Big[\dfrac{2 B_+^2}{I_+} + \dfrac{4 B_0^2}{I_0}\Big]~p^2 x,$$
Thanks to the relation of reciprocity seen in this article, we can find solutions to them through Euler-Lagrange equations (\ref{ecelsl2r}) that take the form of a hyperbolic rigid body:
\Eq{\label{eulagsl2r2}\left\{\begin{array}{ll} \dot{\xi}_+=\dfrac{2(I_0-I_+)}{I_+}~ \xi_0 ~\xi_ +,\\\\
\dot{\xi}_-=-\dfrac{2(I_0-I_-)}{I_+}~ \xi_0 ~\xi_ -,\\\\
\dot{\xi}_0=\dfrac{(I_+-I_-)}{I_0}~ \xi_+ ~\xi_-.\end{array}\right.}
We can solve these equations easily in the symmetric case, i.e., $I_+=I_-=I$. Thus, $\dot{\xi}_0=0$. Let $\alpha$ be  the quantity $\alpha = 2 \xi_0 (I_0 -I)/I$.  Solving the equations (\ref{eulagsl2r2}), we obtain the analogous solution to the top precession in the hyperbolic case, which is written as: 
$$\xi_+= \xi_{+0}~e^{\alpha t}, \xi_-= \xi_{-0}~e ^{-\alpha t}.$$

Finally substituting in (\ref{feedback}) again we obtain the solutions we were looking for:
$$x(t)=\dfrac{C_0}{2 C_+}~e^{-\alpha t}, \quad p(t)=2 C_+ ~e ^{\alpha t},$$ 
where $C_0=\dfrac{I_0 \xi_{0}}{2 B_0}$ and $C_+ =\dfrac{I \xi_{+ 0}}{2 B_+}$. 

\vspace{0.2cm} \subsubsection{The group $SU(2)$ } 
If we substitute $SL(2, \R )$ by $SU(2)$ in the previous example, Section \ref{ricatti_sl2}, the
control equation becomes:
\Eq{\label{xcontrol2} \dot{x}
=a(t)x^2+b(t)x+c(t),} with $a(t)
= \xi_+ B_+- i\xi_- B_- $, $b(t) = 2 i \xi_0 B_0 $ and $c(t)= \xi_+ B_++i\xi_- B_- $.

\vspace{0.2cm}
 The Euler-Lagrange equations for the Lagrangian
(\ref{lagsl2}), using now the Killing--Cartan form of $SU(2)$, are given by:
$$ \dot{g} = g \xi ,  \quad  \dot{J}\xi = [J \xi ,\xi],$$ with
$$J(\xi)=I\xi+\xi I.$$

Repeating again the procedure above we obtain the following solutions for the symmetric case:
$$x(t)=\dfrac{C_0}{ C_-e^{-\alpha t}+i C_+ e^{\alpha t}}, \quad p(t)=C_+ ~e ^{\alpha t}-i C_ - e^{-\alpha t},$$ 
where $C_0=\dfrac{I_0 \xi_{0}}{2 B_0}$, $C_- =\dfrac{I \xi_{- 0}}{2 B_-}$, $C_+ =\dfrac{I \xi_{+ 0}}{2 B_+}$ and $\alpha$ as above.
A similar instance of this correspondence was discussed in the context of quantum optimal control in \cite{Ib08}.

\vspace{0.2cm}


\subsubsection{The group $SO(2,1)$}

If we substitute now $SL(2, \R )$ by the group $SO(2,1)$ whose Lie algebra is given by:
$${\goth {so}}(2,1) = \set{\xi_+ e_+ + \xi_- e_- + \xi_0 e_0 \mid \xi_+, \xi_-, \xi_0 \in \C },$$
with
$$ e_+ = \matriz{rr}{i & 0 \\ 0&-i}, \quad e_- = \matriz{rr}{0 & i \\-i & 0},
\quad e_0 = \matriz{rr}{0 &-1\\ -1 & 0} ,$$ the control equation
becomes:
\Eq{\label{xcontrol3} \dot{x} =a(t)x^2+b(t)x+c(t),} with $a(t)
= \xi_0 B_0+i\xi_- B_- $, $b(t) = 2 i \xi_+ B_+ $ and $c(t)= -\xi_0 B_0+i\xi_- B_-$.

\vspace{0.2cm}
 The Euler-Lagrange equations for the Lagrangian
(\ref{lagsl2}) are given by:
$$ \dot{g} = g \xi ,  \quad I \dot{\xi} = [ \xi^\ast,I\xi] . $$

Following the same procedure we obtain in the symmetric case the solutions:
$$x(t)=\dfrac{C_+ e^{\alpha t}}{ C_-e^{-\alpha t}-i C_0}, \quad p(t)=-i C_-e^{-\alpha t}- C_0,$$ 
where $C_0=\dfrac{I_0 \xi_{0}}{2 B_0}$, $C_- =\dfrac{I \xi_{- 0}}{2 B_-}$ and $C_+ =\dfrac{I \xi_{+ 0}}{2 B_+}$.


\newpage

\section*{Appendix A. A Lagrange's multiplier theorem}

In this appendix we will prove a version of
Lagrange's multiplier theorem which is suitable for
the purposes of the paper.

\begin{theorem}\label{LMT}  Let $E$ be a vector bundle with hermitian connection $\nabla$ and standard fiber the Hilbert space $\mathcal{H}$ over a smooth Hilbertian manifold $M$.
Let $s\in \Gamma(E)$ be a smooth section of
$E$ transverse to the zero section of $E$ and $W$ an open subset in the zero set of the section $s$, $Z_s = \{  x \in M \mid s(x) = 0 \}$.
Let $f\colon M \to \R$ be a $C^1$-function and $f_W\colon W \to
\R$ the restriction of $f$ to $W$.  Then they are equivalent:

\begin{itemize}

\item[i.-]  The point $x_0 \in W$ is a critical point of $f_W$.

\item[ii.-]  There exists $\alpha_0 \in E_{x_0}^*$ such that the
point $(x_0,\alpha_0) \in E^*$ is a critical point of the function $F\colon E^* \to
\R$ given by:
$$F(x,\alpha) = f(x) + \langle \alpha, s(x)\rangle .$$
\end{itemize}

\end{theorem}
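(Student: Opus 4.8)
The plan is to reduce the statement to the classical Lagrange multiplier principle on a manifold by interpreting the constraint surface $W$ as (an open subset of) the zero set $Z_s$ and $E^*$ as the ambient space carrying the multiplier. First I would observe that because $s$ is transverse to the zero section, $Z_s$ is a smooth Hilbertian submanifold of $M$ and its tangent space at a point $x_0\in Z_s$ is exactly $T_{x_0}Z_s = \ker\big(D s_{x_0}\colon T_{x_0}M \to E_{x_0}\big)$, where $Ds_{x_0}$ is the intrinsic derivative of the section (well defined on $Z_s$ independently of the connection, but the connection $\nabla$ gives a convenient splitting $T_{(x_0,0)}E \cong T_{x_0}M \oplus E_{x_0}$ that I will use throughout to compute derivatives). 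Thus $x_0$ is a critical point of $f_W = f|_{Z_s}$ near $x_0$ iff $Df_{x_0}$ annihilates $\ker Ds_{x_0}$.

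Next I would compute the differential of $F(x,\alpha) = f(x) + \langle \alpha, s(x)\rangle$ at a point $(x_0,\alpha_0)$ with $x_0\in Z_s$, again using the connection to split $T_{(x_0,\alpha_0)}E^* \cong T_{x_0}M \oplus E_{x_0}^*$. Since $s(x_0)=0$, the $\alpha$-derivative is $\delta\alpha \mapsto \langle \delta\alpha, s(x_0)\rangle = 0$ automatically, so the $E_{x_0}^*$-component of $dF_{(x_0,\alpha_0)}$ vanishes; and the $M$-component is $\delta x \mapsto Df_{x_0}(\delta x) + \langle \alpha_0, Ds_{x_0}(\delta x)\rangle$. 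Hence $(x_0,\alpha_0)$ is a critical point of $F$ iff $x_0\in Z_s$ and $Df_{x_0} = -\,\alpha_0 \circ Ds_{x_0}$ as elements of $T_{x_0}^*M$; note the first part of (ii) forces $x_0$ to lie in $Z_s$, which is why (ii) correctly recovers the constraint.

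The implication (ii) $\Rightarrow$ (i) is then immediate: if $Df_{x_0} = -\alpha_0\circ Ds_{x_0}$ then $Df_{x_0}$ kills $\ker Ds_{x_0} = T_{x_0}Z_s \supseteq T_{x_0}W$, so $x_0$ is critical for $f_W$. For (i) $\Rightarrow$ (ii) the real content is the surjectivity needed to solve for the multiplier: transversality says $Ds_{x_0}\colon T_{x_0}M \to E_{x_0}$ is surjective with split kernel, so its transpose $(Ds_{x_0})^*\colon E_{x_0}^* \to T_{x_0}^*M$ is injective with closed range, and that range is precisely the annihilator of $\ker Ds_{x_0} = T_{x_0}W$. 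Since (i) says $Df_{x_0}$ lies in that annihilator, there is a unique $\alpha_0\in E_{x_0}^*$ with $-\alpha_0\circ Ds_{x_0} = Df_{x_0}$, and $(x_0,\alpha_0)$ is the desired critical point of $F$.

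The main obstacle is the functional-analytic bookkeeping in the infinite-dimensional (Hilbert manifold) setting: one must make sure that ``transverse to the zero section'' is taken in the strong sense that $Ds_{x_0}$ is surjective \emph{with splitting kernel} (so that $Z_s$ is genuinely a submanifold and the closed-range/annihilator argument goes through), and that the connection-induced splitting of $T E^*$ is used consistently so the computation of $dF$ is legitimate. The hermitian structure on the fibers $\mathcal H$ is what lets us identify the annihilator of a closed split subspace with the range of the adjoint, so I would invoke it exactly at that point; the rest is the standard finite-dimensional argument transported verbatim.
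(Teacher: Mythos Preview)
Your proposal is correct and follows essentially the same approach as the paper: both use the connection to split $T_{(x_0,\alpha_0)}E^*$, compute $dF$ to obtain the condition $df_{x_0} = -\alpha_0\circ Ds_{x_0}$, identify $T_{x_0}Z_s$ with $\ker Ds_{x_0}$, and then solve for the multiplier by arguing that $df_{x_0}$ lies in the annihilator of that kernel. The only cosmetic difference is that where you invoke the closed-range/adjoint argument (surjectivity of $Ds_{x_0}$ with split kernel implies $(Ds_{x_0})^*$ has range equal to $(\ker Ds_{x_0})^0$), the paper names this step the ``Fredholm alternative''; your phrasing is arguably more precise about the role of the splitting hypothesis in the Hilbert setting, but the content is the same.
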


\medskip

The bundle $E^*$ is the dual bundle of $E$.  The fiber $E_x^*$ at each point $x$ of $M$ is
the topological dual of the Hilbert space $E_x \cong \mathcal{H}$ and because of Riesz theorem,
it can be naturally identified with $E_x$.  In this sense $E^* \cong E$.
The function $F$  in the statement of the theorem above can be written alternatively as
$$F = \pi^* f + P_s ,$$
where $\pi\colon E\to M$ is the bundle projection and $P_s$ denoting the linear function
along the fibres of $E$ induced by the
section $s$, $P_s(x,\alpha) = \langle \alpha, s(x)\rangle$.

\bigskip

Proof.  Because $s$ is transverse to the zero section of $E$,
$Z_s$ is a smooth submanifold of $M$.  Since $W$ is
an open subset of $Z_s$, it is a smooth submanifold of $M$.
Moreover, $T_xZ_s = \ker \nabla s(x)$, $x\in Z_s$.
If $x \in W$, because $W$ is open in $s^{-1}(0)$, then we have $T_xW = T_xs^{-1}(0) = \ker \nabla s(x)$.

Let us consider now a point $x_0 \in W$ which is a critical point of $f_W$.
Then, $df_W(x_0) = 0$, i.e., $df(x_0)(U) = 0$ for all $U\in T_xW$,
hence $df(x_0) \in (\ker \nabla s(x_0))^0$.
If we compute now the differential of the function $F$ we obtain,
$$dF(x,\alpha)(X) = df(x)(\pi_*X) + d\langle \alpha, s(x)\rangle (X) = df(x)(U) + \langle X^V, s(x)\rangle +
\langle \alpha , \nabla_{U} s(x) \rangle ,$$
where the tangent vector $X \in T_{(x,\alpha)}E$ is decomposed into its horizontal and vertical components
$X = X^H + X^V$ with respect to the connection $\nabla$, and $\pi_*(X) = U \in T_xM$.
If $x_0\in W$, then $s(x_0 ) = 0$, and we get:
$$dF(x_0,\alpha)(X) = df(x_0)(U) + \langle \alpha , \nabla_U s(x) \rangle .$$
Because of the Fredholm alternative theorem the equation $\langle \alpha , \nabla s(x) \rangle = - df(x_0)$
has a solution $\alpha$ iff $df(x_0) \in (\ker \nabla s(x_0))^0$.

Conversely, if $(x_0,\alpha_0) \in E^*$ is a critical point of the function $F$
then, as $dF(x_0,\alpha_0)=0$ and $x_0\in W$, we get:
$$df(x_0) + \langle \alpha_0 , \nabla s(x_0) \rangle = 0 .$$
Again the equation $\langle \alpha , \nabla s(x) \rangle = - df(x_0)$
has a solution if and only if $df(x_0) \in (\ker \nabla s(x_0))^0$, and this implies that
$df_W(x_0) = 0$.\hfill$\Box$

\newpage

\section*{Appendix B. Lagrange's multiplier theorem on spaces of horizontal curves and optimal control}

To use Lagrange multipliers Thm. \ref{LMT} as stated in Appendix A, in the context of
this paper, requires to set up the adequate framework.    We will use for this purpose
Klingerberg's analytic setting
for functionals in spaces of curves \cite{Kl78}.

We shall consider the spaces of curves $\gamma(t) = (x(t),q(t))$, $t \in I = [0,T]$,
of Sobolev class $k$, $k\geq 1$, on the space $P\times Q$. Such space will be denoted by
$H^k([0,T],P\times Q)$ \cite{Kl78} and  is a
paracompact Hilbert manifold modelled on the Hilbert space
$H^k([0,T], \R^{n+m})$, $n = \dim\: P$, $m= \dim\: Q$, of maps $\gamma\colon
[0,T] \to \R^{n+m}$ in $L^2([0,T], \R^{n+m})$ possessing weak derivatives
$\gamma^{(l)}(t)$, $0\leq l \leq k$, in $L^2([0,T], \R^{n+m})$. The
tangent space to $H^k([0,T], P\times Q)$ at the curve $\gamma (t) =
(x(t), q(t))$ is given by the sections  of Sobolev class $k$ of
the pull-back bundle $\gamma^*(TP\times TQ)$. Because the bundle
$\gamma^*(TP\times TQ)$ over $[0,T]$ is trivial, such space of
sections can be identified with the Hilbert space of Sobolev maps
$H^k([0,T], \R^{n+m})$. We shall denote by $T(H^k([0,T], P\times Q))$ the
tangent bundle thus constructed on this space of curves.  Moreover
we can consider at each map $\gamma$ the Hilbert space of sections
of Sobolev class $l$, $0\leq l \leq k$, of the pull-back bundle
$\gamma^*(TP\times TQ)$. The collection of such spaces defines a
vector bundle over $H^k([0,T], P\times Q)$ whose fiber is given by
$H^l([0,T], \R^{n+m})$. We shall denote such vector bundle as $T^{(l)}
(H^k([0,T], P\times Q))$.

Various endpoint conditions for the curves we are considering
define Hilbert submanifolds of the Hilbert manifold $H^k([0,T], P\times Q)$.
For instance, given $x_0,x_T\in P$, the endpoint conditions considered along
the paper, $x(0) = x_0$, $x(T) = x_T$, define a Hilbert
submanifold of $H^k([0,T], P\times Q)$ that will be denoted
in what follows as $\mathcal{P}_{x_0,x_T}^k$, $k \geq 1$.   Thus:
$$\mathcal{P}_{x_0,x_T}^k = \set{(x(t),q(t)) \in H^k(P\times Q) \mid x(0) = x_0, x(T) = x_T} .$$
The tangent space to $\mathcal{P}_{x_0,x_T}^k$ is given by the Hilbert
subspace,
$$T_\gamma \mathcal{P}_{x_0,x_T}^k = \set{ (\delta x(t),\delta
q(t)) \in H^k(\gamma^*(P\times Q)) = T_\gamma H^k(P\times Q) \mid
\delta x(0) = \delta x(T) =0}.$$ 
We have defined in this way the tangent
bundle $T\mathcal{P}_{x_0,x_T}^k$.  In a similar way, we can
consider for each curve $\gamma\in \mathcal{P}^k$ the set of
sections of Sobolev class $l$, $l\leq k$, of the bundle
$\gamma^*(TP\times TQ)$ vanishing at the endpoints $x_0, x_T$. The total
space of such sections defines another vector bundle
$T^{(l)}\mathcal{P}_{x_0,x_T}^k \to \mathcal{P}_{x_0,x_T}^k$. 
Notice that we can also consider the bundle $T_{\mathcal{P}_{x_0,x_T}^k}^{(l)} (H^k([0,T], P\times Q))$
which is the restriction to $\mathcal{P}_{x_0,x_T}^k$ of the tangent bundle $T^{(l)} (H^k([0,T], P\times Q))$.
Thus for instance, the map $\gamma \mapsto \dot{\gamma}$
is a section of $T_{\mathcal{P}^k}^{(k-1)} (H^k([0,T], P\times Q))$.

\medskip

In order to apply this formalism to the optimal control problem discussed in the body of
the paper, we will consider the Hilbert manifold $M := \mathcal{P}_{x_0,x_T}^k$ of
curves of Sobolev class $k\geq 1$ on $P\times Q$ with fixed endpoints $x_0, x_T\in P$.  
We shall consider too the Hilbert manifold of curves of Sobolev
class $k\geq 1$ on the quotient space $P\times_G Q$ discussed along the paper (see Section \ref{associated}).
We have, as in the previous discussion, the tangent bundle $T^{(k-1)} H^k([0,T], P\times_G Q)$.   The natural projection
$\Pi\colon P\times Q \to P\times_G Q$ induces a projection on the corresponding spaces of curves 
that will be denoted with the same letter $\Pi \colon \mathcal{P}_{x_0,x_T}^k \to
H^k ([0,T], P\times_G Q)$. 

We shall denote by $E$ the pull-back
of the bundle $T^{(k-1)} H^k([0,T], P\times_G Q)$ to $\mathcal{P}_{x_0,x_T}^k$ along the map
$\Pi$. Notice that the fiber of $E\to \mathcal{P}_{x_0,x_T}^k$ at $\gamma
= (x,q)$ is the Hilbert space of $H^k$ sections of the bundle
$(xq)^*(T(P \times_G Q))$. Moreover, the
Hilbert bundle $E$ always admits an hermitian connection $\nabla$
\cite{La85}. Such connection can also be explicitly constructed
from a canonical global metric defined on $E$ but we will not
insist on these aspects here.

Consider now the section $s$ of the bundle $E$ defined by the map:
\begin{equation}\label{sBP}
s(\gamma)(t) = B^P \left( \frac{d}{dt} \big( x(t)q(t) \big) \right),
\end{equation}
where $\gamma(t) = (x(t), q(t))$, $B$ is a principal connection on the principal bundle $\pi_2\colon Q \to N$, and $B^P$ the connection associated to $B$ on the bundle $\pi_Q\colon P\times_G Q \to N$ (see Section \ref{associated_connections}). Clearly the section $s$ is smooth and transverse to the zero section of $E$.   Notice that
the tangent space at a zero section point $\gamma$ of $E$ can be
written as $T_\gamma E = T_\gamma^{(k-1)}\mathcal{P}_{x_0,x_T}^k \oplus
V_\gamma(E)$, where $V_\gamma(E) \cong E_\gamma$ denotes the vertical subspace of $T_\gamma E$.   But any vertical vector $\xi \in E_\gamma$ is in the range of $\nabla s$ for a generic connection $\nabla$.


Finally we will consider the $C^1$-map $S\colon M \to \mathbb{R}$ defined by eq. (\ref{objective}).
Hence, applying Lagrange's multiplier theorem, Thm. \ref{LMT}, to the function $S$ and the section of $E\to M$ 
defined by eq. (\ref{sBP}), we have that
the curve $\gamma$ will be a critical point of the function
$S$ restricted to the submanifold defined
by the zero set $Z_s$ of the section $s$ if and only if there exists an element
$p\in E^*$ such that $(\gamma,p)$ is a critical point of the
function $F\colon E^*\to \R$ given by
$$F(\gamma, p) = S(\gamma) +\langle p, s(\gamma )\rangle .$$
Because of the Sobolev embedding
theorem the critical curve $\gamma(t)$ is of differentiability
class $C^k$ and $p(t)$ is of differentiability class $C^r$ with $r = k - 1$.  Thus if we assume that $\gamma$ is in $\mathcal{P}_{x_0,x_T}^k$ for all
$k\geq 0$, then the critical pair $(\gamma, p)$ will be of class $(k,k-1)$ for all
$k\geq 0$, hence of class $C^\infty$.

Thus we can summarize the previous discussion in the form of the following theorem.

\begin{theorem}\label{smooth_LMT} With the notation above, a smooth curve
$\gamma(t)=(x(t), q(t))$ is a critical point of the functional
$S = \int_0^T L(x,q) dt$ subjected to the horizontal constraint conditions
$$ B^P\left( \frac{d}{dt}\big( x(t)q(t)\big) \right) = 0 $$
if and only if there exists a smooth lifting $(\gamma(t), p(t))$
of this curve that is a critical point of the extended
functional
$$S_{\L} [x,p,q]= \int_0^T \left(L(x,q) + \Big\langle p(t), B^P\Big(\frac{d}{dt}\big(x(t)q(t) \big) \Big) \Big\rangle \right) dt.$$

\end{theorem}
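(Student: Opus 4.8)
The plan is to present the constrained variational problem as the restriction of the objective functional to the zero set of a section of a Hilbert vector bundle over a manifold of Sobolev curves, and then to invoke the Lagrange multiplier theorem, Thm.~\ref{LMT}, together with a regularity bootstrap. Fixing $k\geq 1$, I would work on the Hilbert manifold $M=\mathcal{P}^k_{x_0,x_T}$ of $H^k$-curves $\gamma=(x,q)$ on $P\times Q$ with $x(0)=x_0$, $x(T)=x_T$; on the bundle $E\to M$ obtained as the pull-back along $\Pi$ of $T^{(k-1)}H^k([0,T],P\times_G Q)$, equipped with an hermitian connection $\nabla$; and on the section $s$ of $E$ defined in (\ref{sBP}), namely $s(\gamma)(t)=B^P(\frac{d}{dt}(x(t)q(t)))$, where $B^P$ is the connection induced by $B$ on the associated bundle $\pi_P\colon P\times_G Q\to N$. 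By the characterization of horizontal vectors for $B^P$ given in Section~\ref{associated_connections}, the zero set $Z_s$ is exactly the space of horizontal curves with the prescribed endpoints, so that the restriction of the functional $S$ of (\ref{objective}) to $Z_s$ is the constrained functional appearing in the statement.

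The decisive step, and the one I expect to demand the most care, is to verify that $s$ is transverse to the zero section, so that $Z_s$ is a submanifold with $T_\gamma Z_s=\ker\nabla s(\gamma)$ and Thm.~\ref{LMT} becomes applicable. For this I would compute the linearization $\nabla s(\gamma)$ on a variation $(\delta x,\delta q)$: it is a first-order differential operator whose leading term is the covariant time-derivative followed by the projection, determined by $B^P$, onto the subbundle vertical for $\pi_P$. Since $B^P$ is a genuine connection form on $\pi_P\colon P\times_G Q\to N$, its restriction to that vertical subbundle is an isomorphism; hence variations of $q$ of the form $\delta q(t)=\eta(t)_Q(q(t))$ with $\eta\colon[0,T]\to\goth g$ already realize, via $s$, an arbitrary prescribed $H^{k-1}$-section of the fibre $E_\gamma$, the curve $\eta$ being obtained by solving a first-order linear ODE along $\gamma$. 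This is precisely the surjectivity of $\nabla s$ onto the vertical vectors noted just before the statement; combining it with the freedom available in $M$ yields transversality.

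Granting transversality, Thm.~\ref{LMT} applied to $f=S$ and to the section $s$ gives: a curve $\gamma\in Z_s$ is a critical point of $S|_{Z_s}$ if and only if there exists $p\in E^*_\gamma$, which by the Riesz identification $E^*\cong E$ we may regard as an $H^k$-section, such that $(\gamma,p)$ is a critical point of $F(\gamma,p)=S(\gamma)+\langle p,s(\gamma)\rangle=\int_0^T\big(L(x,q)+\langle p(t),B^P(\frac{d}{dt}(x(t)q(t)))\rangle\big)\,dt$, which is exactly $S_{\L}$. It then remains to upgrade regularity: the critical-point equations of $F$ consist of the horizontal constraint $B^P(\frac{d}{dt}(xq))=0$ together with a Pontryagin-type equation in which $\dot p$ is expressed linearly in $p$ with coefficients assembled from $(x,q,\dot q)$ and the data $L,B$; so by the Sobolev embedding theorem the multiplier $p$ carries one order of regularity less than $\gamma$, and bootstrapping over all $k$ shows that if $\gamma$ is smooth so is $p$. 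The converse direction is immediate: if $(\gamma,p)$ is a smooth critical point of $S_{\L}=F$, then vanishing of the derivative in the $p$-directions forces $s(\gamma)=0$, i.e., $\gamma\in Z_s$, and vanishing of the remaining derivative, together with $s(\gamma)=0$, shows that $dS$ annihilates $T_\gamma Z_s$. Finally, rewriting the multiplier term by means of (\ref{LC}) displays $S_{\L}$ in the Clebsch form $L+\langle J(x,p),B_q(\dot q)\rangle+\theta_P(x,p)(\dot x,\dot p)$ that is used in Thm.~\ref{thClebsh}.
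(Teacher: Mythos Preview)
Your proposal is correct and follows essentially the same approach as the paper: you set up the identical Hilbert-manifold framework $M=\mathcal{P}^k_{x_0,x_T}$, the pull-back bundle $E$, and the section $s$ of (\ref{sBP}), then invoke Thm.~\ref{LMT} and the Sobolev bootstrap exactly as the paper does. Your transversality argument via variations $\delta q=\eta_Q(q)$ and a first-order ODE is in fact more detailed than the paper's, which merely asserts that every vertical vector lies in the range of $\nabla s$ for a generic connection.
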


\newpage

\end{document}